\newtheorem{thm}{Theorem}[section]
\newtheorem{prop}[thm]{Proposition}
\newtheorem{cor}[thm]{Corollary}
\theoremstyle{definition}
\newtheorem{example}[thm]{Example}
\numberwithin{equation}{section}
\newcommand{\R}{\mathbb{R}}
\begin{document}
\title{Kinetic Equations for Processes on Co-evolving Networks}
\author{Martin Burger\thanks{Department Mathematik,  Friedrich-Alexander Universit\"at Erlangen-N\"urnberg, Cauerstr. 11, D 91058 Erlangen, Germany. e-mail: martin.burger@fau.de  } }
\maketitle
\begin{abstract}
The aim of this paper is to derive macroscopic equations for processes on large co-evolving networks, examples being  opinion polarization with the emergence of filter bubbles or other social processes such as norm development. This leads to processes on graphs (or networks), where both the states of particles in nodes as well as the weights between them are updated in time. 
In our derivation we follow the basic paradigm of statistical mechanics: We start from paradigmatic microscopic models and derive a Liouville-type equation in a high-dimensional space including not only the node states in the network (corresponding to positions in mechanics), but also the edge weights between them. We then derive a natural (finite) marginal hierarchy and pass to an infinite limit. 

We will discuss the closure problem for this hierarchy and see that a simple mean-field solution can only arise if the weight distributions between nodes of equal states are concentrated. In a more interesting general case we propose a suitable closure at the level of a two-particle distribution (including the weight between them) and discuss some properties of the arising kinetic equations. Moreover, we highlight some structure-preserving properties of this closure and discuss its analysis in a minimal model. We discuss the application of our theory to some agent-based models in literature and discuss some open mathematical issues. 
%
\end{abstract}

\section{Introduction}

Following the seminal work by Boltzmann and Maxwell (cf. \cite{boltzmann,maxwell}), kinetic equations have
emerged as a standard tool for the description of (stochastic) interacting particle systems.
Nowadays their rigorous mathematical treatment as well as the derivation of macroscopic
models (cf. \cite{bouchut,cercignani,cercignani2,cercignani3,glassey,villani}) are reasonably well understood. In many modern applications of interacting
particle systems, in particular in social and biological systems, there is a key ingredient
not included in the basic assumptions of kinetic theory, namely a dynamic network structure
between particles (rather called agents in such systems and thus used synonymously in this paper).
While models in physics assume that interactions happen if particles are spatially close, social
interactions rather follow a network structure between particles, which changes at the same
time as the state of the particles (thus called co-evolution, cf. \cite{thurner} and references therein). A canonical example is
the formation of opinions or norms on social networks, where interactions can only happen if
the agents are connected in the network. On the other hand the network links are constantly
changing, and these processes influence each other: Opinions change if there is a connection,
e.g. for followers of some posts. Vice versa agents may tend to follow others with similar
opinions.

In this paper we thus want to establish an approach towards kinetic equations for interacting
particle systems with co-evolving network structures. We consider processes where each
agent has a state that can change in an interaction and there is a network weight between
two agents (zero if not connected), which can change over time. This includes a variety of
microscopic agent-based models in recent literature. We start from a Liouville-type equations
that describes the evoluion of the joint probability measure of the N agent states as well as the
$N(N - 1)$ weights between agents (respectively $\frac{N(N-1)}2$ in the case of undirected networks).
From those we derive a moment hierarchy resembling the original BBGKY-hierarchy, with
the difference that here we derive equations for the probability measure describing k agent
states and $k(k - 1)$ weights between them ( $\frac{k(k-1)}2$ in the undirected case).
It turns out that a key difference to standard types of kinetic models without co-evolving
networks (weights) is the problem to find a simple closure relation. Since the one-particle
distribution does not depend on weights at all, there is obviously no solution of the hierarchy
in the form of product measures (as the classical Stosszahlansatz by Boltzmann). We show
that such a solution exists only in the case of special solutions that exhibit concentration
in the weight variables. In the general case we propose a closure relation at the level of
the two-particle and single-weight distribution. We discuss the mathematical properties of
the resulting equations as well as the existence of different types of stationary solutions,
which are relevant for processes on social networks (such as opinion formation of social norm
construction).
In several examples we discuss how the kinetic equations and some special forms relate
to agent-based models recently introduced in literature. Among others these microscopic models have been used to simulate the following issues:
\begin{itemize}

\item Opinion formation on social networks, including polarization and the formation of echo chambers 
(cf. \cite{baumann,benatti,boschi,chitra,gu,maia,nigam,sugishita})

\item Knowledge networks (cf. \cite{tur}), with state $s$ being a degree of knowledge.

\item Social norm formation and social fragmentation (cf. \cite{kohne,pham,pham2})

\item Biological transport networks (cf. \cite{hu,albi}), with state corresponding to a pressure or similar variable and the weight encoding the capacity of network links.

\end{itemize} 

The derivation of macroscopic models in this context is not just a mathematical exercise, but appears to be of high relevance in order to obtain structured predictions about pattern formation in such systems. While agent-based models rely on simulations for special parameter sets, which can hardly be calibrated from empirical data, the analysis of macroscopic models can provide explanations and predictions of patterns and transitions obtained at the collective level. In this way some concerns about agent-based models like their reproducibility and their limitation to special parameter values (cf. \cite{carney,conte,donkin}) can be avoided. 

The paper is structured as follows: in Section 2 we discuss a paradigmatic model based on a Vlasov-type dynamics, which allows to highlight the basic ideas and properties of the microscopic models, in particular the description via a distribution of $N$ particles and $N(N-1)$ weights. Section 3 presents a general structure for microscopic models and discusses several special cases with examples from agent-based simulations in literature. In Section 4 we discuss the hierarchy obtained from the moments of the microscopic distribution and its infinite limit. We highlight the non-availability of a closed-form solution in terms of a single particle distribution and the need to describe the system in terms of a pair distribution (the distribution for two  particles and the weight between them). As a consequence we also discuss possible closure relations at the level of the pair distribution. These closure relations are further investigated for a minimal model with binary states and weights in Section 5. At this level we can also identify simple structural assumptions for the formation of polarization patterns. Section 6 is devoted to a more detailed mathematical study of the macroscopic version of the paradigmatic model from Section 2, with a particular focus on a closure relation based on the conditional distribution. In Section 7 we further discuss some modelling issues like social balance theory and related processes, which effectively lead to triplet interactions
in weights or states.   Finally, we also discuss a variety of
open and challenging mathematical problems for the equations at the level of  pair distributions.

\section{A Paradigmatic Model: Vlasov-type Dynamics}

In this section and for the exposition of further arguments we consider the genuine system 
\begin{align}
\frac{ds_i}{dt} &= \frac{1}N \sum_{j \neq i} U(s_i,s_j,w_{ij}) \label{eq:microscopic1} \\
\frac{dw_{ij}}{dt} &= V(s_i,s_j,w_{ij}) \label{eq:microscopic2}
\end{align}
in order to highlight the mathematical properties and the derivation of macroscopic equations.
In a similar spirit as \cite{thurner} we will use the continuous time model as a paradigm, but in an analogous way we will also consider other types of interactions leading to kinetic equations in the next section. Here $s_i \in \R^m$ denotes the state variable 
and $w_{ij} \in \R$ the weight between node $i$ and $j$. This minimal model naturally encodes the typical processes and network co-evolutions as also proposed in \cite{thurner}. The interactions of states are mitigated by the weight on the edge between them, while the change of weights on an edge depends on the states of the vertex it connects. Note that we have incorporated a mean-field scaling already in the above system, other types of scaling are left for future research. Let us mention that \eqref{eq:microscopic1}, \eqref{eq:microscopic2} shares some similarities with the interaction models with time varying weights, which have been analyzed in detail in \cite{ayi,mcquade,pouradier}. In their case the weight is independent of $j$ however, which corresponds to a special solution where $V$ is independent of $s_j$ and the $w_{ij}$ have the same initial value for all $j$.

In many cases it is desirable to have symmetry of the network (coresponding to an undirected graph), i.e. $w_{ij} = w_{ji}$ for $i \neq j$, and no loops, i.e. $w_{ii}=0$. This is preserved by a natural symmetry condition for $V$, namely
\begin{equation} \label{eq:Vsymmetry}
V(s,\sigma,w) = V(\sigma,s,w) \qquad \forall s,\sigma \in \R^m, w \in \R.
\end{equation}
 We shall assume that $U$ and $V$ are Lipschitz-continuous functions, which directly implies the existence and uniqueness for \eqref{eq:microscopic1}, \eqref{eq:microscopic2} by the Picard-Lindel\"of Theorem. 

\begin{prop}
Let $U: \R^m \times \R^m \times \R \rightarrow \R^m$ and $V: \R^m \times \R^m \times \R \rightarrow \R$ be Lipschitz-continuous functions.   Then there exists a unique solution of the initial-value problem for \eqref{eq:microscopic1}, \eqref{eq:microscopic2} such that $s_i \in C^1(\R_+)$, $w_{ij} \in C^1(\R_+)$ . If $w_{ij}(0) = w_{ji}(0)$ for all $i \neq j$ and \eqref{eq:Vsymmetry} is satisfied, then
$w_{ij}(t) = w_{ji}(t)$ for $i \neq j$ and all $t \in \R_+$.
\end{prop}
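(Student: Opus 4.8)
The plan is to split the proposition into two independent assertions and prove them in sequence.

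For the existence-uniqueness part, I would rewrite the full system \eqref{eq:microscopic1}, \eqref{eq:microscopic2} as a single autonomous ODE $\dot{y} = F(y)$ on the finite-dimensional space $\R^{Nm} \times \R^{N(N-1)}$, where $y$ collects all the state variables $s_i$ and all the weights $w_{ij}$. The right-hand side $F$ is built from sums and evaluations of $U$ and $V$, so since these are assumed Lipschitz-continuous, $F$ is itself Lipschitz-continuous on the whole space (the finite sum $\frac{1}{N}\sum_{j\neq i} U(s_i,s_j,w_{ij})$ inherits a Lipschitz constant that is a finite multiple of that of $U$). The Picard--Lindel\"of theorem then yields a unique local $C^1$ solution; because the argument is already noted in the text, the only thing to verify carefully is global existence on all of $\R_+$, for which I would invoke a standard continuation argument together with the global (rather than merely local) Lipschitz bound, which rules out finite-time blow-up. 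This first part is essentially routine.

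The more interesting assertion is the preservation of symmetry. Here the natural strategy is to introduce, for each pair $i \neq j$, the difference $d_{ij}(t) \eqdef w_{ij}(t) - w_{ji}(t)$ and to show that $d_{ij}(t)$ satisfies a homogeneous linear ODE with zero initial data, so that $d_{ij} \equiv 0$. Differentiating and using \eqref{eq:microscopic2} gives
\begin{equation}
\df{d_{ij}}{t} = V(s_i,s_j,w_{ij}) - V(s_j,s_i,w_{ji}).
\end{equation}
The symmetry hypothesis \eqref{eq:Vsymmetry} lets me replace $V(s_j,s_i,w_{ji})$ by $V(s_i,s_j,w_{ji})$, so the right-hand side becomes $V(s_i,s_j,w_{ij}) - V(s_i,s_j,w_{ji})$, which is a difference of $V$ evaluated at two points differing only in the third argument. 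Invoking the Lipschitz continuity of $V$ with constant $L$, I can bound $\abs{\df{d_{ij}}{t}} \le L\,\abs{w_{ij}-w_{ji}} = L\,\abs{d_{ij}}$.

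From this differential inequality the conclusion follows by a Gr\"onwall-type argument: since $d_{ij}(0)=0$ by assumption, the estimate forces $d_{ij}(t)=0$ for all $t \in \R_+$. The cleanest way to present this is probably to argue directly via uniqueness rather than through an inequality: the entire symmetrized configuration $\tilde{w}_{ij}(t) \eqdef w_{ji}(t)$ solves the same initial-value problem as $w_{ij}(t)$ once \eqref{eq:Vsymmetry} is used (one must check that swapping $i \leftrightarrow j$ leaves the coupled state equations \eqref{eq:microscopic1} invariant as well, which it does because the sum over $j\neq i$ is unordered), and then the uniqueness already established in the first part immediately yields $w_{ij} \equiv w_{ji}$. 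The main subtlety to handle with care is that symmetry of the weights and the dynamics of the states are coupled, so one cannot treat the weight equations in isolation; the difference/uniqueness argument must be applied to the full system simultaneously. This coupling is the only genuine obstacle, and it is resolved precisely by the fact that \eqref{eq:Vsymmetry} guarantees the swapped configuration is again an admissible solution.
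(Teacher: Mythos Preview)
Your proposal is correct in substance and matches the paper's approach: the paper does not give a separate proof of this proposition but simply remarks, immediately before stating it, that existence and uniqueness follow from the Picard--Lindel\"of theorem applied to the coupled system. Your write-up supplies exactly that argument, together with the standard global-Lipschitz continuation to get existence on all of $\R_+$, and your Gr\"onwall argument for the symmetry part is valid and self-contained.

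One caveat concerns your suggested ``cleaner'' alternative via uniqueness. The claim that the transposed configuration $\tilde w_{ij}=w_{ji}$ (with the same states $s_i$) solves the \emph{same} initial-value problem is not correct as stated: the state equation \eqref{eq:microscopic1} reads $\dot s_i=\frac1N\sum_{j\neq i}U(s_i,s_j,w_{ij})$, and after replacing $w_{ij}$ by $\tilde w_{ij}=w_{ji}$ the right-hand side becomes $\frac1N\sum_{j\neq i}U(s_i,s_j,w_{ji})$, which in general differs from the original unless $w_{ij}=w_{ji}$ is already known. The unorderedness of the summation index does not help here, since each summand carries the specific weight $w_{ij}$, not $w_{ji}$. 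So the transpose map is a symmetry of the weight equations (thanks to \eqref{eq:Vsymmetry}) but not of the full coupled system, and the uniqueness shortcut does not go through directly. Your Gr\"onwall argument avoids this trap precisely because it fixes the actual trajectory $(s_i(t))$ and compares only $w_{ij}$ with $w_{ji}$ along it; that argument is the one to keep.
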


Let us mention a canonical example of the interactions, namely
\begin{equation} \label{eq:Uexample}
U(s_i,s_j,w_{ij})  = - w_{ij} K(s_i-s_j)
\end{equation} 
with an odd kernel $K$ (e.g. $K=\nabla G$ for an even and attractive potential) and
\begin{equation} \label{eq:Vexample}
V(s_i,s_j,w_{ij}) = \eta(s_i-s_j) - \kappa  w_{ij} 
\end{equation}
with nonnegative kernels $\eta$ and $\kappa$. Here $w_{ij}$ is directly the interaction strength, the weight is increased for states $s_i$ and $s_j$ close and decays with relaxation time $\kappa^{-1}$. If $\eta = - c G$ for some constant $c > 0$, then the model \eqref{eq:microscopic1}, \eqref{eq:microscopic2} has a gradient structure of the form
\begin{equation} \label{eq:microscopicgf}
 \frac{ds_i}{dt} = - \nabla_{s_i} E^N, \qquad \frac{dw_{ij}}{dt} = - 2 c N \partial_{w_{ij}} E^N \end{equation}
with the microscopic energy functional
\begin{equation}
 E^N(s_1,\ldots,s_N,w_{12},\ldots,w_{N,N-1}) = \frac{1}{2N} \sum_{i=1}^N \sum_{j\neq i} \left( w_{ij} G(s_i-s_j) + \frac{\kappa w_{ij}^2}{2c}\right).
\end{equation}
A more general version of gradient flows is obtained with \eqref{eq:microscopicgf} and the more general energy functional
\begin{equation}
 E^N(s_1,\ldots,s_N,w_{12},\ldots,w_{N,N-1}) = \frac{1}{2N} \sum_{i=1}^N \sum_{j\neq i}  F(s_i,s_j,w_{ij} ) .
\end{equation}
This implies that the forces are derived from the potential $F$ via
\begin{equation} \label{eq:potential}
 U(s,\sigma,w) = - \nabla_{s} F(s,\sigma,w), \qquad    V(s,\sigma,w) = - c \partial_{w}  F(s,\sigma,w). 
\end{equation}

\subsection{The $N$-particle and weight measure}

In classical kinetic theory the evolution of the system is first described by the joint measure of the $N$ particles, which corresponds to $s_1, \ldots, s_N$. In our setting we need to extend this to a joint measure of the $N$ states $s_i$ and the $N(N-1)$ weights $w_{ij}$, which we denote by $\mu^N_t$ and still denote as $N$-particle measure. 
The corresponding Liouville-type equation for $\mu^N_t$ is given by 
\begin{equation} \label{eq:liouville}
\partial_t \mu^N_t + \frac{1}N  \sum_i  \sum_{j \neq i} \nabla_{s_i} \cdot (  U(s_i,s_j,w_{ij}) \mu_t^N ) 
+  \sum_i  \sum_{j \neq i}  \partial_{w_{ij}} \cdot ( V(s_i,s_j,w_{ij}) \mu_t^N )  = 0.
\end{equation} 
The weak formulation of \eqref{eq:liouville} is given by
\begin{align} 
\frac{d}{dt} \int \varphi(z_N) \mu_t^N(dz_N)  =&  \sum_i  \sum_{j \neq i} \int ( \frac{1}N \nabla_{s_i} \varphi(z_N)    U(s_i,s_j,w_{ij}) + 
\nonumber \\ & \qquad \qquad
\partial_{w_{ij}}  \varphi(z_N)   V(s_i,s_j,w_{ij}) )\mu_t^N(dz_N) , \label{eq:liouvilleweak} 
\end{align}
for $\varphi \in {\cal S}(\R^{N(m+1)})$. 
For Lipschitz-continuous velocities $U$ and $V$ the existence and uniqueness of a weak solution can be modified in a straight-forward way by the method of characteristics (cf. \cite{golse}), the solution $\mu_t^N$ is the push-forward of $\mu_0^N$ under the unique solutions $(s_i,w_{ij})$ of \eqref{eq:microscopic1}, \eqref{eq:microscopic2}.

In the case of forces derived from a potential $F$, i.e. \eqref{eq:potential}, the energy is given by
 $$ E[\mu_t^N] = \frac{1}{2N} \sum_{i=1}^N \sum_{j\neq i} \int F(s_i,s_j,w_{ij}) ~\mu_t^N(dz_N) , $$
the gradient flow structure is propagated to the Liouville equation in an Otto-type geometry (cf. \cite{jko,otto}) as
\begin{align*}
\partial_t \mu^N_t &= \sum_i  \sum_{j \neq i} \nabla_{s_i} \cdot (  \mu_t^N \nabla_{s_i} E') 
+  2c N \sum_i  \sum_{j \neq i}  \partial_{w_{ij}}  ( \partial_{w_{ij}}E(s_i,s_j,w_{ij}) \mu_t^N   ) .
\end{align*}
In particular we obtain an energy dissipation of the form
$$ \frac{d}{dt} E[\mu_t^N] = -  \sum_i  \sum_{j \neq i} \int \left( \vert\nabla_{s_i} F(s_i,s_j,w_{ij})\vert^2 + c (\partial_{w_{ij}}F(s_i,s_j,w_{ij}))^2 \right)~\mu_t^N(dz_N).$$
We will later return to the gradient flow structure and dissipation in the context of macroscopic equations and investigate their possible preservation. 

%
%

\subsection{Time Scales}

Concerning time we can investigate different scaling limits, which effectively mean a relative scaling of the forces $U$ and $V$. We also discuss a third (mixed) case, which relates to the concept of network-structured models in the limit. 

\subsubsection{Instantaneous Network Formation} 

In some models the network is rebuilt in very small time scales. An extreme case with instantaneous network formation are early bounded confidence models of opinion formation such as the Hegselmann-Krause \cite{hegselmann} or
Deffuant-Weissbuch model  \cite{deffuant}, where the network is built in each time steps between all agents having opinions inside a certain confidence interval. In order to model such a situation it is convenient to introduce a small parameter $\epsilon > 0$ in \eqref{eq:microscopic2} and instead considered the scaled equation. 
\begin{equation}
\epsilon \frac{dw_{ij}}{dt}  = V(s_i,s_j,w_{ij}) \label{eq:microscopic2a}
\end{equation} 
The corresponding Liouville equation is given by
\begin{equation} \label{eq:liouvilleeps}
\partial_t \mu^N_t + \frac{1}N  \sum_i  \sum_{j \neq i} \nabla_{s_i} \cdot (  U(s_i,s_j,w_{ij}) \mu_t^N ) 
+  \frac{1}\epsilon \sum_i  \sum_{j \neq i}  \partial_{w_{ij}} \cdot (  V(s_i,s_j,w_{ij}) \mu_t^N )  = 0.
\end{equation} 

In the simplest case there exists a unique solution $\omega(s,\sigma)$ of the equation
\begin{equation} \label{eq:omega}
V(s,\sigma,\omega(s,\sigma))  = 0 
\end{equation} 
and we expect convergence to the reduced equation
\begin{equation}
\partial_t \mu^N_t + \frac{1}N  \sum_i  \sum_{j \neq i} \nabla_{s_i} \cdot (  U(s_i,s_j,\omega(s_i,s_j)) \mu_t^N ) = 0
\end{equation} 
under suitable properties of $V$. This equation is in the standard form of interaction equations for the particles $s_i$ that can be described by a mean-field limit (cf. \cite{golse}).  


Note that we can also study the problem at a fast time scale (rescaled from $t$ to $\epsilon^{-1}t$), which corresponds to \eqref{eq:microscopic2} coupled with 
$$\frac{ds_i}{dt}  = \epsilon \frac{1}N \sum_{j \neq i} U(s_i,s_j,w_{ij}). $$
Apparently the limit is given by stationary states $s$ at this scale and thus \eqref{eq:microscopic2} is a pure network adaption model in a given environment
$$ \frac{dw_{ij}}{dt}(t)  = V(s_i,s_j,w_{ij}(t)) = V_{ij}(w_{ij}(t). $$

\subsubsection{Instantaneous State Adaption}

The opposite time scale is related to  a fast adaptation of states instead of the weights. This amounts to \eqref{eq:microscopic2} coupled with
\begin{equation}
\epsilon \frac{ds_i}{dt}  =  \frac{1}N \sum_{j \neq i} U(s_i,s_j,w_{ij}).
\end{equation} 
The limit $\epsilon \rightarrow 0$ is much more complicated in this case compared to the instantaneous network adaption. Formally it is described by $\sum_{j \neq i} U(s_i,s_j,w_{ij})=0$, which is a fully coupled system among the states and weights.

At a fast time scale we obtain instead stationarity of the weights $w_{ij}$ and hence the state adaption is a standard interacting particle system (in a heterogenous environement however)
\begin{equation}
 \frac{ds_i}{dt}(t)  =  \frac{1}N \sum_{j \neq i} U(s_i(t),s_j(t),w_{ij}) = 
\frac{1}N \sum_{j \neq i} U_{ij}(s_i(t),s_j(t) ).
\end{equation}
The mathematical complication in a limit of this system for large $N$ is  inherent in the fact that the particles are not indistinguishable due to the specific forces $U_{ij}$.

\subsubsection{Network-Structured Models} 

Let us finally consider a case where the state is composed of two distinct parts $s_i=(x_i,y_i)$ such that
$V$ depends only on the first part $x_i$. If we now consider a fast adaption in the $x$ part as well as the weights the effective limit will provide stationary weights $w_{ij} = \omega(x_i,x_j)$ and the remaining equation  for the $y$ components becomes (with $U^Y$ the corresponding component of the force $U$)
\begin{equation}
 \frac{dy_i}{dt}(t)  =  \frac{1}N \sum_{j \neq i} U^Y(x_i,y_i(t),x_j,y_j(t),\omega(x_i,x_j))  = 
  \frac{1}N \sum_{j \neq i} \tilde U(x_i,x_j;y_i(t),y_j(t)).
\end{equation}
This corresponds to the framework of network-structured models derived in \cite{burger2}.

\section{Microscopic Models for Processes on Co-Evolving Networks}
 
We consider finite weighted graphs ${\cal G}^N(t) =({\cal V},{\cal E}(t),w(t))$, $t \in \R_+$ with $N$ vertices and symmetric edge weights $w_{ij}(t)$ evolving in time.  Moreover, we consider a vertex function $s:{\cal V} \times \R_+ \rightarrow \R^m$. The value $s_i(t)$ describes the state of agent $i$ (identified with the respective vertex) at time $t$.  Setting $w_{ij}(t) = 0 $ for $(i,j) \notin {\cal E}(t)$, we can equivalently describe such a graph by $(s,w) \in {\cal S}^N \times {\cal W}^N$, where ${\cal S}^N$ is a subset of $\R^{mN}$ and ${\cal W}^N$ is a subset of $\R^{N \times N}$. The exact shape of ${\cal W}^N$ depends on the specific model, e.g. we can restrict to nonnegative weights or include the case of an unweighted graph by choosing ${\cal W}^N = \{0,1\}^{N \times N}$.
We mention that for undirected graphs we can use symmetry of $w$ and avoid loops by $w_{ii}=0$, in this case the description could be even reduced to an element in $\R^{mN + N(N-1)/2}$. Starting from the weights $w$ we say that there is an edge or link between $i$ and $j$ if $w_{ij} \neq 0$.

The microscopic description of the process and the co-evolving network can be carried out by using a probability measure $\mu^N_t$ on ${\cal S}^N \times {\cal W}^N$ for $t \in \R_+$ and formulating an evolution equation for $\mu^N$.  We will now first derive a general model structure and then pass to some other special cases based on examples from literature.  

\subsection{General Model Structure}

In the following we provide a general structure for the microscopic kinetic models, including continuum structures as in the paradigmatic model above, but alos operators arising from long-range jumps (e.g. abrupt changes of weights or states) and discrete models such as binary 
${\cal W} = \{0,1\}$. In addition we consider a random (diffusive) change of states or weights, possibly again depending on the variables themselves, which yields
\begin{align}
\partial_t \mu_t^N =& \frac{1}N \sum_{i=1}^N \sum_{j\neq i} D_{s_i}^* (U(s_i,s_j,w_{ij}) \mu_t^N) + \sum_{i=1}^N \sum_{j\neq i} D_{w_{ij}}^* (V(s_i,s_j,w_{ij}) \mu_t^N) \nonumber \\
& + \sum_{i=1}^N  D_{s_i}^* (U_0(s_i) \mu_t^N) - \sum_{i=1}^N  D_{s_i}^*  D_{s_i} (Q(s_i) \mu_t^N) - \sum_{i=1}^N \sum_{j\neq i} D_{w_{ij}}^*D_{w_{ij}}(R(s_i,s_j,w_{ij}) \mu_t^N). \label{eq:general} 
\end{align}
A variety of different models of the above kind can be found in literature, below we will show how to put them into this general structure.

\subsection{Important Cases}

\subsubsection{Continuous State and Weight Model with State Diffusion }

A slight variation of the paradigmatic model is obtained when adding random change of the state variable via a Wiener process (with effective diffusion coefficient $R$), which leads to 
\begin{align}
&\partial_t \mu_t^N + \frac{1}N \sum_{i=1}^N \sum_{j\neq i} \nabla_{s_i} \cdot (U(s_i,s_j,w_{ij}) \mu_t^N) + \sum_{i=1}^N \sum_{j\neq i} \partial_{w_{ij}}  (V(s_i,s_j,w_{ij}) \mu_t^N) \nonumber \\
&  \qquad  = -\sum_{i=1}^N  \nabla_{s_i} \cdot (U_0(s_i) \mu_t^N)  +\sum_{i=1}^N  \Delta_{s_i} (Q(s_i) \mu_t^N)  .
\end{align}
Here $D_{s_i} = \nabla$ and $D_{s_i}^*= - \nabla \cdot$. In particular in opinion formation models the impact of noise in the state variable has been highlighted in the last years (cf. \cite{carro,raducha}), which is then incorporated in the additional diffusion term. We give a recent example of an agent-based model combining noise with a network co-evolution:

\begin{example}
In  \cite{boschi} a completely continuous model is studied, given by (with the rescaled weights $w_{ij} = N J_{ij}$)
\begin{align*}
ds_i &= - s_i dt + I_i dt + \frac{1}N \sum_{j \neq i} w_{ij} g(s_j) dt + \sigma dW_i \\
dw_{ij} &= \gamma ( J_0 g(s_i) g(s_j) - w_{ij}) dt
\end{align*}
where the $W_i$ are uncorrelated Wiener processes, $\gamma$ and $J_0$ are positive functions, $g$ is a sigmoidal function, and $I_i$ models external influence by media. 

Here the state space is ${\cal S}=\R$ and the weight space is ${\cal W}=\R^+$, one easily notices that the dynamics of the weights $w_{ij}$ cannot lead to a negative value if $g$ is a nonnegative function. The arising equation for $\mu_t^N$, ignoring the external influence $I$, is given by 
\begin{align}  \label{eq:boschi}
\partial_t \mu^N_t =& - \frac{1}N  \sum_{i=1}^N  \sum_{j \neq i} \nabla_{s_i} \cdot (  (- s_i + \frac{1}N \sum_{j \neq i} w_{ij} g(s_j)    \mu_t^N ) 
\nonumber \\ & -  \sum_{i=1}^N  \sum_{j \neq i}  \nabla_{w_{ij}} \cdot (  \gamma ( J_0 g(s_i) g(s_j) - w_{ij}) \mu_t^N )  + \frac{\sigma^2}2 \sum_{i=1}^N \Delta_{s_i} \mu_t^N.
\end{align} 
We see that \eqref{eq:boschi} is a special case of \eqref{eq:general}  with diffusion coefficients $Q=\frac{\sigma^2}2$, $R=0$, external force $U_0(s_i) = - s_i$ and interactions
$$  U(s_i,s_j,w_{ij}) =     w_{ij} g(s_j), \qquad V(s_i,s_j,w_{ij}) =\gamma ( J_0 g(s_i) g(s_j) - w_{ij}) .$$
\end{example}

\subsubsection{Continuous States and Binary Weights}

An important case arising in many models is the one of a continuous state space ${\cal S}$ combined with an unweighted graph, which can be rephrased as a set of binary weights ${\cal W}=\{0,1\}$. A corresponding formulation of the kinetic model is given by
\begin{equation}
\partial_t \mu^N_t + \frac{1}N \sum_{i=1}^N \sum_{j\neq i} \nabla_{s_i} \cdot (U(s_i,s_j,w_{ij}) \mu_t^N) =
\sum_{i=1}^N \sum_{j\neq i}  (V(s_i,s_j,w_{ij}') \mu_t^{N,ij} - V(s_i,s_j,w_{ij} ) \mu_t^N),
\end{equation} 
where $w_{ij}'=1-w_{ij}$ and $\mu_t^{N,ij}$ equals $\mu_t^N$ with argument $w_{ij}$ changed to $w_{ij}'$. 
In this case $D_{s_i} = \nabla_{s_i}$ and 
$$ D_{w_i}^* \varphi = \varphi^{ij} - \varphi, $$
where $\varphi^{ij}$ denotes the evaluation of argument $w_{ij}$ changed to $w_{ij}'$. 

\begin{example}In recent variants of bounded confidence models an averaging process of the form 
\begin{equation}
ds_i = \frac{1}N \sum_{j \neq i} w_{ij} (F(s_j) - s_i)~dt
\end{equation}
is carried out on opinions with a linear or nonlinear function $F: \R^+ \rightarrow \R^+$, e.g. $F(s) = k \tanh(\alpha s)$ as in \cite{baumann}. A new edge between $i$ and $j$ is established with a rate $\frac{1}\tau r(|s_i-s_j|)$, $\tau$ a small relaxation time and $r$ being a decreasing function on $\R^+$. The links are removed quickly, which again corresponds to a rate $\frac{1}\tau$. Thus, we obtain a special case of \eqref{eq:general} with ${\cal S}=\R^+$, ${\cal W}=\{0,1\}$, $U_0=Q=R=0$ and 
$$ U(s_i,s_j,w_{ij}) = w_{ij}(F(s_j) - s_i), \qquad V(s_i,s_j,w_{ij}) = \frac{1}\tau (r( |s_i-s_j|) - w_{ij}). $$
In the limit $\tau \rightarrow 0$ we recover bounded confidence models like the celebrated Hegselmann-Krause model \cite{hegselmann} with weight $w_{ij} =  r( |s_i-s_j|). $
\end{example} 

\subsubsection{Discrete States and Binary Weights}

A variety of models (cf. \cite{benatti,maia,min,pham,pham2,raducha,thurner0,thurner}) is based on discrete states (often binary ${\cal S}=\{-1,1\}$) and binary weights (${\cal W}=\{0,1\}$), the interactions consequently being a switching of states between connected particles ($w_{ij}=1$) and the addition and removal of links (changes of weigths between $0$ and $1$ depending on the states). The corresponding evolution of the $N$-particle and weight distribution is thus described by
\begin{align*}
\partial_t \mu_t^N =&  \frac{1}N \sum_{i=1}^N \sum_{j \neq i} \left(\sum_{s_j^* \in {P(s_i)}} U(s_i^*,s_j^*,w_{ij}) \mu_t^{N,s,ij}-U(s_i,s_j,w_{ij}) \mu_t^N \right) + 
\\ & \sum_{i=1}^N \sum_{j \neq i} (V(s_i,s_j,w_{ij}') \mu_t^{N,w,ij} \mu_t^N-V(s_i,s_j,w_{ij}) \mu_t^N)
\end{align*}
where $P(s_i)$ is the set of pre-collisional states $s_j^*$ such that there exists $s_i^* \in {\cal S}$, whose interaction with $s_j^*$ leads to post-collisional state $s_i$.

\begin{example}
The co-evolving voter model, as considered in \cite{min,raducha,thurner0,thurner} is a generic example of a discrete state and weight model. Originally it is formulated in discrete time steps, where in each step a node is picked at random. Then, a neighbouring node is chosen, whose state $s^*$ is compared with the state $s$ of the original node. If $s$ and $s^*$ differ, the link is removed (i.e. the weight is changed from one to zero) with probability $p$, while with probability $1-p$ the state $s$ is changed to $s'$. In the first case another node is chosen randomly and connected to the first one if it has the same state $s$. As a continuous time analogue (with continuous waiting times between events) we obtain the $N$-particle equation
\begin{align}
\partial_t \mu_t^N =&  \frac{1}N \sum_{i=1}^N p\left( \sum_{j \neq i, w_{ij}=0, s_j \neq s_i} \frac{\sum_{k \neq i,j, w_{ik}=1, s_k = s_i}\mu_t^{N,ijk}}{\sum_{k \neq i,j, w_{ik}=1, s_k = s_i} 1} -
 \sum_{j \neq i, w_{ij}=1, s_j \neq s_i}  \mu_t^N \right) +  \nonumber
\\ & \frac{1}N \sum_{i=1}^N (1-p) \left( \sum_{j \neq i, w_{ij}=1, s_j = s_i} \mu_t^{N,i} - \sum_{j \neq i, w_{ij}=1, s_j \neq s_i} \mu_t^N\right) \\
=& \frac{1-p}N \sum_{i=1}^N \sum_{j \neq i} w_{ij} (\mu_t^{N,i} |s_j -s_i'| - \mu_t^{N } |s_j -s_i |  ) + \nonumber \\&
\frac{ p}N \sum_{i=1}^N \sum_{j \neq i} \left( w_{ij}' |s_j-s_i|\frac{\sum_{k \neq i,j}w_{ik} |s_k - s_i'| \mu_t^{N,ijk} }{\sum_{k \neq i,j } w_{ik} |s_k - s_i'|}  - w_{ij}|s_j-s_i| \mu_t^N\right) \nonumber
\end{align}
Here $\mu_t^{N,i}$ denotes $\mu_t^N$ with state $s_i$ changed to $s_i'$ and $\mu_t^{N,ijk}$ with weights $w_{ij}, w_{ik}$ changed to 
 $w_{ij}', w_{ik}'$.
This fits into the modelling  using
$$ D_{s_i} \varphi = \varphi^i - \varphi, \quad D_{w_{ij}}   \varphi = \varphi^{ij} - \varphi $$
and $ U(s_i,s_,w_{ij}) = (1-p) w_{ij} |s_i-s_j|$, up to the three particle interaction with $k$.

We can also consider a variant where a link to a neighbouring node is changed with probability $pq$ and a link to a new node is established with probability $p(1-q)$, which leads to 
\begin{align}
\partial_t \mu_t^N  
=& \frac{1-p}N \sum_{i=1}^N \sum_{j \neq i} w_{ij} (\mu_t^{N,i} |s_j -s_i'| - \mu_t^{N } |s_j -s_i |  ) + \nonumber \\&
{ pq} \sum_{i=1}^N \sum_{j \neq i} \left( w_{ij}' |s_j-s_i| \mu_t^{N,ij}    - w_{ij}|s_j-s_i| \mu_t^N\right) + \nonumber \\&
{ p(1-q)} \sum_{i=1}^N \sum_{j \neq i} \left( w_{ij}' |s_j-s_i'| \mu_t^{N,ij}    - w_{ij}|s_j-s_i'| \mu_t^N\right) .
\end{align}
Here we exactly obtain a special case of \eqref{eq:general} with the above discrete operators $D_{s_i}$ and $D_{w_{ij}}$ as well as the potentials
\begin{equation}
U(s_i,s_j,w_{ij}) = (1-p) w_{ij}|s_j -s_i |, \quad V(s_i,s_j,w_{ij}) =  p w_{ij}( q |s_j -s_i | + (1-q)|s_j -s_i' |).
\end{equation}
\end{example}

\begin{example} {\bf A Minimal Model. } 
As a basis for further analysis we consider a minimal model with binary states ${\cal S} = \{-1,1\}$ and binary weights ${\cal W}=\{0,1\}$, where interaction between states only appears between nodes that are connected ($w_{ij}=1$). This leads to an equation of the form
\begin{align}
\partial_t \mu_t^N =&  \frac{1}N \sum_{i=1}^N \sum_{j \neq i} w_{ij}( \alpha(s_i',s_j) \mu_t^{N,i}-\alpha(s_i,s_j) \mu_t^N ) + \nonumber
\\ & \sum_{i=1}^N \sum_{j \neq i} \left( \beta(s_i,s_j)   (w_{ij}\mu_t^{N, ij} - w_{ij}' \mu_t^N) + \gamma(s_i,s_j)   (w_{ij}'\mu_t^{N, ij} - w_{ij} \mu_t^N) \right) , \label{eq:minimal}
\end{align}
where $\alpha$ is the rate of changing states, $\beta$ the rate to establish links and $\gamma$ the rate of removing links.
Here we denote by $ \mu_t^{N,i}$ the version of $\mu_t^{N}$ with argument $s_i'=-s_i$ instead of $s_i$ and by $\mu_t^{N, ij} $ the version with argument $w_{ij}'=1-w_{ij}$ instead of $w_{ij}$.
\end{example}
%
%
%
%
 %

\section{Moment Hierarchies and Closure Relations}

The classical transition from microscopic to macroscopic models in kinetic theory is based on a hierarchy of moments, also called {\em BBGKY hierarchy} in the standard setting (cf. \cite{cercignani3,golse,spohn}) or {\em Vlasov hierarchy} in the corresponding setting (cf. \cite{golsemouhot,spohnneunzert}). We want to derive a similar approach in the following, where we have to take into account the additional correspondence on the weights however. For the sake of simpler exposition and notation we will restrict ourself to the case of an undirected graph without loops, but an analogous treatment is possible for directed weighted graphs.  Our approach will be to consider a system of moments for $k$ vertices and the corresponding edge weights between them. This means the first moment just depends on $s_1$, the second on $s_1, s_2$ and $w_{12}$, the $k$-th on $k$ states and 
$\frac{k(k-1)}2$ weights.

Let us denote by $z_k = (s_i,w_{ij})_{1 \leq i,j \leq k, i \neq j}$ a vector corresponding to the first $k$ states and the weights between them and by $z^{N,k}$ the full vector of states and weights without $z_k$.  Then we define the corresponding moment
\begin{equation}
\mu^{N:k}_t = \int \mu^N_t(dz^{N,k}),
\end{equation} 
with the obvious modification to a sum in the case of a discrete measure.
Integrating \eqref{eq:liouville} we actually see that the moments $\mu^{N:k}_t$ satisfy a closed system of equations, we have for $k=1,\ldots,N$
\begin{align} 
& \partial_t \mu^{N:k}_t = \frac{1}N  \sum_{i \leq k}  \sum_{j \neq i, j \leq k} D_{s_i}^*   (  U(s_i,s_j,w_{ij}) \mu_t^{N:k} ) 
+ \sum_{i \leq k}  \sum_{j \neq i, j \leq k}  D_{w_{ij}}^*  ( V(s_i,s_j,w_{ij}) \mu_t^{N:k} )   
  \nonumber \\ & \qquad \qquad  + \frac{N-k}N \sum_{i \leq k} D_{s_i}^* ( \int U(s_i,s_{k+1},w_{ik+1}) \mu_t^{N:k+1} (dz^{k+1,k}  )    
	+ \sum_{i \leq k}  D_{s_i}^* (U_0(s_i) \mu_t^{N:k}) \nonumber \\ & \qquad \qquad - \sum_{i\leq k}   D_{s_i}^*  D_{s_i} (Q(s_i) \mu_t^{N:k}) - \sum_{i\leq k} \sum_{j \neq i, j \leq k} D_{w_{ij}}^*D_{w_{ij}}(R(s_i,s_j,w_{ij}) \mu_t^{N:k}),
\end{align} 
where the integration with the measure $ \mu_t^{N:k+1} $ is on the variables $z^{k+1,k}=(s_{k+1},w_{1k+1},\ldots,w_{kk
+1}).$

\subsection{Infinite Hierarchy}

In the limit $N\rightarrow \infty$ we formally obtain the infinite-system
\begin{align} 
& \partial_t \mu^{\infty:k}_t  =   \sum_{i \leq k} D_{s_i}^* ( \int U(s_i,s_{k+1},w_{ik+1}) \mu_t^{\infty:k+1}(dz^{k+1,k}) )    
  \nonumber \\ & \qquad \qquad  + \sum_{i \leq k}  \sum_{j \neq i, j \leq k} D_{w_{ij}}^* ( V(s_i,s_j,w_{ij}) \mu_t^{\infty:k} )   
	+ \sum_{i \leq k}  D_{s_i}^* (U_0(s_i) \mu^{\infty:k}_t) \nonumber \\ & \qquad \qquad - \sum_{i\leq k}   D_{s_i}^*  D_{s_i} (Q(s_i)  \mu^{\infty:k}_t) - \sum_{i\leq k} \sum_{j \neq i, j \leq k} D_{w_{ij}}^*D_{w_{ij}}(R(s_i,s_j,w_{ij})  \mu^{\infty:k}_t)
\end{align} 
which has a similar structure as the BBGKY/Vlasov-hierarchy in kinetic theory. The closedness of the infinite system confirms our choice of marginals in the space of states and weights. 

Let us mention that the first marginal is just the particle density in state space and satisfies 
\begin{equation}
 \partial_t \mu_t^{\infty:1}  =      D_{s_1}^*   \left[ (\int U(s_1,s_2,w_{12}) \mu_t^{\infty:2}(ds_2,dw_{12}) + U_0(s_1)\mu_t^{\infty:1}\right]   - D_{s_1}^*  D_{s_1} (Q(s_1) \mu_t^{\infty:1})
\end{equation} 
It is apparent that there is no simple closure or propagation of chaos in terms of $\mu_t^{\infty:1}$ as long as there is any nontrivial dependence of $U$ and $\mu_t^{\infty:2}$ on the weight $w_{12}$.  As we shall see below, at least in the paradigmatic Vlasov-type model there is some kind of propagation of chaos for the system in terms of the states only, if the measures $\mu_t^{\infty:k}$ exhibit weight concentration phenomena, i.e., concentrate at $w_{ij} = W(s_i,s_j,t)$ for some function $W$.
For the description of the system and its nontrivial network structure the second marginal  $\mu_t^{\infty:2}$ appears to be the more relevant quantity anyway. It satisfies
\begin{align} \label{eq:pairunclosed}
\partial_t \mu^{\infty:2}_t  =&   \sum_{i \leq 2} D_{s_i}^* \left[\int U(s_i,s_{3},w_{i3}) \mu_t^{\infty:3} (ds_3 dw_{13} dw_{23}) + 
U_0(s_i)\mu_t^{\infty:2}- D_{s_i} (Q(s_i)  \mu^{\infty:2}_t) \right]     \nonumber \\ &  +   D_{w_{12}}^* \left[ V(s_1,s_2,w_{12}) \mu_t^{\infty:2}  -    D_{w_{12}}(R(s_1,s_2,w_{12})  \mu^{\infty:2}_t) \right]
\end{align}
We shall below discuss closure relations that approximate the solutions solely in terms of $\mu^{\infty:2}_t$.

\subsection{Pair Closures}

In the following we discuss different options for obtaining a closure in \eqref{eq:pairunclosed}.
A standard closure relation used in statical mechanics is the so-called Kirkwood closure (cf. \cite{kirkwood,singer}), which approximates the triplet distribution by 
pair and single particle distributions, more precisely it assumes a factorization of the triplet correlation into pair correlations. 
The analogous form for our setting is given by
\begin{equation} \label{eq:kirkwood}
\mu_t^{ 3}(dz_3) = \frac{\mu_t^{ 2}(ds_1 ds_2 dw_{12})}{\mu_t^1(ds_1)\mu_t^1(ds_2)}  \frac{\mu_t^{ 2}(ds_1 ds_3 dw_{13})}{\mu_t^1(ds_1)\mu_t^1(ds_3)}    \frac{\mu_t^{ 2}(ds_2 ds_3 dw_{23})}{\mu_t^1(ds_2)\mu_t^1(ds_3)}  \mu_t^1(ds_1)\mu_t^1(ds_2)\mu_t^1(ds_3)
\end{equation}
where the quotients denote the respective Radon-Nikodym derivatives. The Kirkwood closure might yield an overly complicated system for the pair distribution, which we see by examining the relevant terms, namely the integrals of $U$ in \eqref{eq:pairunclosed}. For $i=1$ we have
$$ \int U(s_1,s_{3},w_{13}) \mu_t^{\infty:3} (ds_3 dw_{13} dw_{23}) = \int U(s_1,s_{3},w_{13})  \eta_t^2  \mu_t^{ 2}(ds_1 ds_2 dw_{12}) \frac{\mu_t^{ 2}(ds_1 ds_3 dw_{13})}{\mu_t^1(ds_1) }   
$$
where $\eta_t^2$ is the projection of the Radon-Nikodym derivative of $\mu_t^2$ to the first two variables, i.e. for each set $A \subset {\cal S}^2$,
$$ \int_A \eta_t^2 \mu_t^1(ds_2)\mu_t^1(ds_3) = \int_A \int_{{\cal W}}  {\mu_t^{ 2}(ds_2 ds_3 dw_{23})}  . $$
Note that $\frac{\mu_t^{ 2}(ds_1 ds_3 dw_{13})}{\mu_t^1(ds_1) }$ is the conditional distribution of $s_3$ and $w_{13}$ given $s_1$.

As argued for a similar class of particle systems (without the network weights) in \cite{berlyand1,berlyand2}, $\eta_t^2$ seems to be unnecessary for a suitable approximation of the integrals, hence their closure simplifies to 
\begin{equation} \label{eq:berlyand}
\int U(s_i,s_{3},w_{i3}) \mu_t^{\infty:3} (ds_3 dw_{13} dw_{23}) = \int U(s_i,s_{3},w_{i3})  \mu_t^{ 2}(ds_1 ds_2 dw_{12}) \frac{\mu_t^{ 2}(ds_i ds_3 dw_{i3})}{\mu_t^1(ds_1) }   .
\end{equation}
In order to state the arising equation it is more convenient to use the weak formulation, for which we obtain
\begin{align}  
\frac{d}{dt} \int \varphi(z_2) \mu^{ 2}_t(dz_2)  =&   
\sum_{i \leq 2}  \int D_{s_i} \varphi(z_2)   (U(s_i,s_{3},w_{i3}) \gamma_t(s_i;ds_3,dw_{i3})    + 
U_0(s_i)\mu_t^{2}(dz_2 )) -  \nonumber \\ &
\sum_{i \leq 2}  \int D_{s_i} \varphi(z_2)  D_{s_i} (Q(s_i)  \mu^{2}_t ) (dz_2 )
  +  \nonumber \\ & \int D_{w_{12}} \varphi(z_2)   V(s_1,s_2,w_{12}) (\mu_t^{ 2}(dz_2 )  -    D_{w_{12}}(R(s_1,s_2,w_{12})  \mu^{2}_t)(dz_2 ) ) 
\end{align} 
with the conditional distribution
$$ \gamma_t(s_i;ds_3,dw_{i3}) = \frac{\mu_t^{ 2}}{\mu_t^{ 1}}(ds_3,dw_{i3}).$$

\section{Minimal Model}

In the following we further study the minimal model \eqref{eq:minimal} with the above closures. With the short-hand notations
\begin{align}
f_{\pm,\pm} &= \mu_t^2(\pm 1, \pm 1, 1), \quad g_{\pm,\pm} = \mu_t^2(\pm 1, \pm 1, 0), \\
 \rho_+ &= f_{++} + g_{++} + f_{+-} + g_{+-} \\
 \rho_- &= f_{--} + g_{--} + f_{+-} + g_{+-}
\end{align} 
noticing $f_{+-} = f_{-+}$ due to symmetry, we arrive at the following models: In the case of the closure based on the conditional distribution \eqref{eq:berlyand} we have
\begin{align}
\partial_t f_{++} =& \alpha_{-+} \frac{f_{+-}^2}{\rho_-} - \alpha_{+-} \frac{f_{++} f_{+-}}{\rho_+} + \beta_{++} g_{++} - \gamma_{++} f_{++}
\label{eq:minimal1} \\
\partial_t g_{++} =& \alpha_{-+} \frac{g_{+-}f_{+-}}{\rho_-} - \alpha_{+-} \frac{g_{++} f_{+-}}{\rho_+} - \beta_{++} g_{++} + \gamma_{++} f_{++} \\
\partial_t f_{--} =&  \alpha_{+-} \frac{f_{+-}^2}{\rho_+} - \alpha_{-+} \frac{f_{--} f_{+-}}{\rho_-} + \beta_{--} g_{--} - \gamma_{--} f_{--} \\
\partial_t g_{--} =& \alpha_{+-} \frac{g_{+-}f_{+-}}{\rho_+} - \alpha_{-+} \frac{g_{--} f_{+-}}{\rho_-} - \beta_{--} g_{--} + \gamma_{--} f_{--} \\
\partial_t f_{+-} =&  - \alpha_{-+} \frac{f_{+-}^2}{2\rho_-} + \alpha_{+-} \frac{f_{++} f_{+-}}{2\rho_+} - \alpha_{+-} \frac{f_{+-}^2}{2\rho_+} + \alpha_{-+} \frac{f_{--} f_{+-}}{2\rho_-} \nonumber \\
& +\beta_{+-} g_{+-} - \gamma_{+-} f_{+-} \\
\partial_t g_{+-} =&  - \alpha_{-+} \frac{g_{+-}f_{+-}}{2\rho_-} + \alpha_{+-} \frac{g_{++} f_{+-}}{2\rho_+}  - \alpha_{+-} \frac{g_{+-}f_{+-}}{2\rho_+} + \alpha_{-+} \frac{g_{--} f_{+-}}{2\rho_-} \nonumber \\ & - \beta_{+-} g_{+-} + \gamma_{+-} f_{+-} \label{eq:minimal6} 
\end{align}
We can also introduce the weight-averaged densities $h_{\pm\pm} = f_{\pm\pm} + g_{\pm\pm}$, which satisfy the equations
\begin{align}
\partial_t h_{++} =& \alpha_{-+} \frac{h_{+-}f_{+-} }{\rho_-} - \alpha_{+-} \frac{h_{++} f_{+-}}{\rho_+} \\
\partial_t h_{--} =& \alpha_{+-} \frac{h_{+-}f_{+-}}{\rho_+} - \alpha_{-+} \frac{h_{--} f_{+-}}{\rho_-} \\
\partial_t h_{+-} =& - \frac{1}2 (\partial_t h_{++} + \partial_t h_{--}),
\end{align}
the latter being equal to the conservation property $\partial_t (\rho_- + \rho_+) = 0$.

For the Kirkwood closure we find instead
\begin{align}
\partial_t f_{++} =& \alpha_{-+} \frac{f_{+-}^2}{\rho_-} \frac{h_{++}}{\rho_+^2} - \alpha_{+-} \frac{f_{++} f_{+-}}{\rho_+} \frac{h_{+-}}{\rho_+ \rho_-}+ \beta_{++} g_{++} - \gamma_{++} f_{++}
\label{eq:minimalk1} \\
\partial_t g_{++} =& \alpha_{-+} \frac{g_{+-}f_{+-}}{\rho_-} \frac{h_{++}}{\rho_+^2} - \alpha_{+-} \frac{g_{++} f_{+-}}{\rho_+} \frac{h_{+-}}{\rho_+ \rho_-}- \beta_{++} g_{++} + \gamma_{++} f_{++} \\
\partial_t f_{--} =&  \alpha_{+-} \frac{f_{+-}^2}{\rho_+} \frac{h_{--}}{\rho_-^2}- \alpha_{-+} \frac{f_{--} f_{+-}}{\rho_-} \frac{h_{+-}}{\rho_+ \rho_-}+ \beta_{--} g_{--} - \gamma_{--} f_{--} \\
\partial_t g_{--} =& \alpha_{+-} \frac{g_{+-}f_{+-}}{\rho_+} \frac{h_{--}}{\rho_-^2} - \alpha_{-+} \frac{g_{--} f_{+-}}{\rho_-} \frac{h_{+-}}{\rho_+ \rho_-}- \beta_{--} g_{--} + \gamma_{--} f_{--} \\
\partial_t f_{+-} =&  - \alpha_{-+} \frac{f_{+-}^2}{2\rho_-}  \frac{h_{++}}{\rho_+^2} + \alpha_{+-} \frac{f_{++} f_{+-}}{2\rho_+} \frac{h_{+-}}{\rho_+ \rho_-} - \alpha_{+-} \frac{f_{+-}^2}{2\rho_+} \frac{h_{--}}{\rho_-^2} \nonumber \\
&+ \alpha_{-+} \frac{f_{--} f_{+-}}{2\rho_-} \frac{h_{+-}}{\rho_+ \rho_-} +\beta_{+-} g_{+-} - \gamma_{+-} f_{+-} \\
\partial_t g_{+-} =&  - \alpha_{-+} \frac{g_{+-}f_{+-}}{2\rho_-}  \frac{h_{++}}{\rho_+^2}+ \alpha_{+-} \frac{g_{++} f_{+-}}{2\rho_+} \frac{h_{+-}}{\rho_+ \rho_-}  - \alpha_{+-} \frac{g_{+-}f_{+-}}{2\rho_+} \frac{h_{--}}{\rho_-^2}\nonumber \\ &+ \alpha_{-+} \frac{g_{--} f_{+-}}{2\rho_-} \frac{h_{+-}}{\rho_+ \rho_-}  - \beta_{+-} g_{+-} + \gamma_{+-} f_{+-} \label{eq:minimalk6} 
\end{align}
The equations for  the weight-averaged densities are given by
\begin{align}
\partial_t h_{++} =& \alpha_{-+} \frac{h_{+-}f_{+-} }{\rho_-} \frac{h_{++}}{\rho_+^2}  - \alpha_{+-} \frac{h_{++} f_{+-}}{\rho_+} \frac{h_{+-}}{\rho_+ \rho_-} \\
\partial_t h_{--} =& \alpha_{+-} \frac{h_{+-}f_{+-}}{\rho_+} \frac{h_{--}}{\rho_-^2} - \alpha_{-+} \frac{h_{--} f_{+-}}{\rho_-} \frac{h_{+-}}{\rho_+ \rho_-}\\
\partial_t h_{+-} =& - \frac{1}2 (\partial_t h_{++} + \partial_t h_{--}).
\end{align}
 
\subsection{Transient Solutions}

In order to provide a first analysis of solutions to \eqref{eq:minimal1}-\eqref{eq:minimal6} a key observation is to consider the 
evolution of the single particle densities $\rho_+$ and $\rho_-$ respectively. We find
\begin{equation} \label{eq:rhoplusevolution}
\partial \rho_+ = - \partial_t \rho_- = \left( \frac{\alpha_{-+}}2 - \frac{\alpha_{+-}}2\right) f_{+-},
\end{equation} 
in the case of \eqref{eq:minimal1}-\eqref{eq:minimal6}, and
\begin{equation} \label{eq:rhoplusevolutionk}
\partial \rho_+ = - \partial_t \rho_- = \left( \frac{\alpha_{-+}}2 - \frac{\alpha_{+-}}2\right) \frac{\rho_- h_{++}+ \rho_+ h_{--}}{\rho_+ \rho_-} \frac{h_{+-}}{\rho_+ \rho_-}f_{+-},
\end{equation} 
in the case of \eqref{eq:minimalk1}-\eqref{eq:minimalk6},
which implies first of all that $\rho_+$ is conserved if $\alpha_{-+} = \alpha_{-+}$. We also find the natural properties that $\rho_+$ increases if $\alpha_{-+} > \alpha_{-+}$, i.e. if there is a stronger bias towards the positive state, and decreases if $\alpha_{-+} < \alpha_{-+}$. In these cases we see immediately that concentration (consensus in the language of opinion formation) is reached if $f_{+-}$ is bounded away from zero. With a standard proof (see Appendix) we obtain the following result:

\begin{thm} \label{minimalexistencethm}
For each nonnegative starting value with $\rho_+(0) \in (0,1)$, $\rho_-(0) = 1 - \rho_+(0)$, there exists a time $T_* > 0$ such that there exists a unique solution 
$$ (f_{++},g_{++},f_{--},g_{--},f_{+-},g_{+-}) \in C^\infty([0,T_*))^6 $$
of \eqref{eq:minimal1}-\eqref{eq:minimal6} as well as of \eqref{eq:minimalk1}-\eqref{eq:minimalk6}. Moreover, either $T_* = \infty$ or $\rho_+(t) \rho_-(t) \rightarrow 0$
as $t \uparrow T_*$. If $\alpha_{-+} = \alpha_{-+}$ then $T_* = \infty$.
\end{thm}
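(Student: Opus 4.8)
The plan is to treat both closure systems as autonomous ODEs $\dot x = F(x)$ for the unknown vector $x = (f_{++},g_{++},f_{--},g_{--},f_{+-},g_{+-})$, exploiting that the only obstruction to smoothness of the right-hand sides is division by $\rho_+$ and $\rho_-$. I would therefore first fix the relatively open domain
\[
\Omega = \{\, x \in \R^6 : x \geq 0,\ \rho_+(x) > 0,\ \rho_-(x) > 0 \,\},
\]
on which $F$ is a rational, hence $C^\infty$ (indeed real-analytic) vector field in both the conditional-closure case \eqref{eq:minimal1}--\eqref{eq:minimal6} and the Kirkwood case \eqref{eq:minimalk1}--\eqref{eq:minimalk6}. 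Since $\rho_+(0) \in (0,1)$ forces $\rho_+(0) > 0$ and $\rho_-(0) = 1 - \rho_+(0) > 0$, the initial datum lies in $\Omega$, so Picard--Lindel\"of yields a unique local $C^1$ solution, which the smoothness of $F$ upgrades to $C^\infty$ by the usual bootstrap. Let $[0,T_*)$ denote the maximal interval of existence inside $\Omega$.

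The next ingredient is the conservation law: reading off \eqref{eq:rhoplusevolution} (resp. \eqref{eq:rhoplusevolutionk}) one has $\partial_t(\rho_+ + \rho_-) = 0$, so $\rho_+(t) + \rho_-(t) \equiv \rho_+(0) + \rho_-(0) = 1$ on $[0,T_*)$. This is what makes the a priori bounds available once nonnegativity is established. For nonnegativity I would invoke the quasi-positivity of $F$: on each coordinate face $\{x_\ell = 0\}$ (with the remaining coordinates nonnegative and $\rho_\pm > 0$, so that $F$ is defined) the component $F_\ell$ reduces to a sum of nonnegative terms, because every negatively signed term in \eqref{eq:minimal1}--\eqref{eq:minimal6}, and likewise in the Kirkwood system, carries the vanishing variable as a factor; for example at $f_{++}=0$ one is left with $\alpha_{-+}f_{+-}^2/\rho_- + \beta_{++}g_{++} \geq 0$. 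Hence $F$ points into or along the positive orthant on its boundary, and the standard invariant-region (Nagumo) argument shows the closed orthant is forward invariant. Combined with the conservation law, each component is then dominated by $\rho_+$ or $\rho_-$ and thus bounded by $1$, so the trajectory stays in the compact box $[0,1]^6$.

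It remains to read off the dichotomy. On $[0,T_*)$ the solution is nonnegative and bounded, so it cannot escape to infinity; by the continuation theorem for ODEs on an open domain, if $T_* < \infty$ the trajectory must leave every compact subset of $\Omega$, which being bounded forces it to approach $\partial\Omega$, i.e. $\min(\rho_+(t),\rho_-(t)) \to 0$. Since $\rho_+ + \rho_- \equiv 1$, this is exactly $\rho_+(t)\rho_-(t) \to 0$ as $t \uparrow T_*$, the asserted alternative. Finally, when the two state-change rates coincide, $\alpha_{-+} = \alpha_{+-}$, the right-hand side of \eqref{eq:rhoplusevolution} (resp. the bracket in \eqref{eq:rhoplusevolutionk}) vanishes, so $\rho_+ \equiv \rho_+(0)$ and $\rho_- \equiv 1-\rho_+(0)$ remain constant and strictly positive; then $\rho_+\rho_-$ is bounded away from $0$, the second alternative is impossible, and $T_* = \infty$.

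I expect the delicate point to be the bookkeeping in the invariance step: verifying face by face, and separately for the two closures (with the additional $h/\rho^2$ factors in the Kirkwood system), that all negative contributions contain the vanishing coordinate, while simultaneously ensuring $\rho_\pm$ does not vanish on the face under consideration so that $F$ is genuinely defined there. The conceptual subtlety is that positivity of the individual components and strict positivity of $\rho_\pm$ must be propagated \emph{together}; strict positivity of $\rho_\pm$ holds by construction on the maximal interval in $\Omega$, and it is precisely its possible loss, rather than any blow-up, that the dichotomy records.
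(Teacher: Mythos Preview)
Your argument is correct and follows the same overall architecture as the paper's proof: local existence via Picard--Lindel\"of, $C^\infty$ regularity from smoothness of the rational right-hand side, preservation of nonnegativity, the conservation law $\rho_+ + \rho_- \equiv 1$ yielding boundedness in $[0,1]^6$, and then the continuation dichotomy forcing $\rho_+\rho_- \to 0$ at a finite $T_*$. The one substantive difference is in the nonnegativity step: the paper writes the system as $\partial_t u = -A(u)u$ with $A(u)$ an M-matrix and appeals to resolvent positivity of M-matrices and positive-semigroup theory, whereas you check the quasi-positivity (Nagumo) condition directly on each coordinate face. Your route is more elementary and self-contained, and avoids the slight awkwardness that the paper's M-matrix formulation is nonlinear (the matrix depends on $u$); on the other hand, the paper's formulation packages all six face checks into a single structural observation. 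Both are standard and equivalent for this finite-dimensional setting.
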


If $T_*$ is finite we thus see that either the limit of $\rho_+$ or $\rho_-$ vanishes and since $f_{+-} \leq \rho_+$ and $f_{+-} \leq \rho_-$ we obtain $f_{+-}(t) \rightarrow 0$ as well. This is not surprising, since such as state means consensus at one of the two states and hence there are no agents of another state to connect to. A possibly more interesting situation can happen if $T_* = \infty$, then we might have polarization, i.e. $f_{+-} = 0$ with $\rho_+$ and $\rho_-$ both being positive. This means there are agents of either state but no connection between them. A direct insight can be obtained by simply integrating \eqref{eq:rhoplusevolution} in time, which yields
$$ 1 \geq \left( \frac{\alpha_{-+}}2 - \frac{\alpha_{+-}}2\right) \int_0^\infty f_{+-}(t)~dt. $$ 
If the coefficient on the right-hand side is vanishing, $f_{+-}$ is integrable, which implies the following result:

\begin{cor}
Let $\alpha_{-+} \neq \alpha_{-+}$ and let $ (f_{++},g_{++},f_{--},g_{--},f_{+-},g_{+-}) \in C^\infty([0,T_*))^6 $ be the unique solution of \eqref{eq:minimal1}-\eqref{eq:minimal6}. If $T_* = \infty$, then $f_{+-}(t) \rightarrow 0$ as $t \rightarrow \infty$.
\end{cor}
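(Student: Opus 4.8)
The plan is to upgrade the integrability of $f_{+-}$ already recorded before the statement into genuine decay. Integrating \eqref{eq:rhoplusevolution} over $[0,\infty)$ and using that $\rho_+$ is monotone (since $f_{+-}\geq 0$ and the coefficient has fixed sign) and confined to $[0,1]$ gives, since $\alpha_{-+}\neq\alpha_{+-}$ by hypothesis,
$$ \int_0^\infty f_{+-}(t)\,dt \;\leq\; \frac{1}{\left|\frac{\alpha_{-+}}{2}-\frac{\alpha_{+-}}{2}\right|} \;<\; \infty. $$
Integrability alone does not force $f_{+-}(t)\to 0$, as an integrable function may retain persistent spikes of fixed height on ever-shorter intervals. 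The decisive second ingredient is therefore uniform continuity of $f_{+-}$ on $[0,\infty)$; together with integrability this yields the claim by a Barbalat-type argument.

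First I would establish a uniform Lipschitz bound, i.e. a constant $C$ with $|\partial_t f_{+-}(t)|\leq C$ for all $t$, using the evolution equation for $f_{+-}$ inside the system \eqref{eq:minimal1}--\eqref{eq:minimal6}. The only potentially dangerous terms are the quotients carrying $\rho_+$ or $\rho_-$ in the denominator, which could blow up if $\rho_\pm$ degenerates along the trajectory. These are neutralized by the elementary structural inequalities $0\leq f_{+-}\leq\rho_\pm$ and $0\leq f_{\pm\pm}\leq\rho_\pm$, valid because all six densities are nonnegative and each $\rho_\pm$ is a sum of them. Indeed $\frac{f_{+-}^2}{\rho_-}\leq f_{+-}\leq 1$, $\frac{f_{++}f_{+-}}{\rho_+}\leq f_{+-}\leq 1$, and similarly for the remaining two singular terms, while the linear contributions $\beta_{+-}g_{+-}$ and $\gamma_{+-}f_{+-}$ are bounded by $\beta_{+-}$ and $\gamma_{+-}$. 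Summing shows that every term is bounded by a constant depending only on the fixed finite rates, so $f_{+-}$ is Lipschitz and hence uniformly continuous on $[0,\infty)$.

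Finally I would combine the two facts directly. Suppose toward a contradiction that $f_{+-}(t)\not\to 0$; then there are $\varepsilon>0$ and $t_n\uparrow\infty$ with $f_{+-}(t_n)\geq\varepsilon$. The Lipschitz bound forces $f_{+-}\geq\varepsilon/2$ on each interval $[t_n-\varepsilon/(2C),\,t_n+\varepsilon/(2C)]$; after passing to a subsequence making these intervals disjoint, each contributes at least $\varepsilon^2/(2C)$ to $\int_0^\infty f_{+-}$, contradicting integrability. Hence $f_{+-}(t)\to 0$ as $t\to\infty$.

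The main obstacle is the middle step, specifically the role of the denominators $\rho_\pm$. The key point is that this is \emph{not} a matter of bounding $\rho_\pm$ away from zero, which need not hold when $T_*=\infty$, but rather of recognizing that every singular quotient carries a matching factor in its numerator ($f_{+-}\leq\rho_\pm$, $f_{\pm\pm}\leq\rho_\pm$), so the quotients stay uniformly bounded precisely where $\rho_\pm$ is small. Once that a priori bound on $\partial_t f_{+-}$ is secured, the remainder is the standard Barbalat mechanism.
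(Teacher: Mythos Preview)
Your argument is correct and follows the paper's approach: the paper simply records the integrability of $f_{+-}$ from \eqref{eq:rhoplusevolution} and declares that this ``implies the following result,'' without spelling out why integrability forces decay. You have supplied precisely the missing step, namely the uniform Lipschitz bound on $\partial_t f_{+-}$ (obtained by cancelling the denominators $\rho_\pm$ against matching numerator factors $f_{+-}\leq\rho_\pm$, $f_{\pm\pm}\leq\rho_\pm$) together with a Barbalat argument, so your proof is in fact more complete than the paper's sketch.
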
 

As a consequence of this result we may expect to find polarized stationary solutions of the minimal model with closure based on the conditional distribution \eqref{eq:minimal1}-\eqref{eq:minimal6}, which we further investigate in an asymptotic parameter regime below.
A similar analysis for the minimal model with Kirkwood closure \eqref{eq:minimalk1}-\eqref{eq:minimalk6} is less obvious due to the additional terms in \eqref{eq:rhoplusevolutionk}. With $\rho_+ \leq 1$, $\rho_- \leq 1$, and $h_{+-} \geq f_{+-}$ we can at least infer the 
integrability of $ (h_{++} + h_{--}) f_{+-}^2, $ which yields $f_{+-} \rightarrow 0$ or $h_{++} + h_{--} \rightarrow 0$. The latter can only happen if $\rho_+ - \rho_- = h_{+-} - h_{+-} \rightarrow 0$. This kind of limiting solution with $h_{++}=h_{--}=0$ and $h_{+-}=\frac{1}2$ actually appears as a deficiency of the Kirkwood closure to describe the many-particle limit. Such a solution can appear in the case of two particles, with one in the $+$ and one in the $-$ state. As soon as there are three or more particles, we will find at least two of them in the same state, which implies $h_{++} + h_{--} > 0$. 

\subsection{Stationary Solutions and Polarization}

In order to understand possible segregation phenomena we study the stationary solutions of \eqref{eq:minimal1}-\eqref{eq:minimal6}, focusing in particular on the case when two nodes are mainly connected if they have the same state and rather disconnected if the have opposite states. This means that $\beta_{+-}$ is small and $\gamma_{+-}$ is rather large. 

We first study the stationary equations for $h_{++}$ and $h_{--}$ in the case of the conditional distribution based closure, which yields $f_{+-}=0$ or
$$ h_{++} = \frac{\alpha_{-+} \rho_+ h_{+-}}{\alpha_{+-} \rho_-}, \quad  h_{--} = \frac{\alpha_{+-} \rho_- h_{+-}}{\alpha_{-+} \rho_+}.$$
Together with 
$$ 1 = \rho_+ + \rho_- = h_{++} + h_{--} + 2 h_{+-} $$
we arrive at 
\begin{align*} &h_{++} = \frac{\alpha_{-+}^2 \rho_+^2}{(\alpha_{-+} \rho_+ + \alpha_{+-} (1-\rho_+))^2}, \quad  h_{--}=\frac{ \alpha_{+-}^2  (1- \rho_+)^2}{(\alpha_{-+} \rho_+ + \alpha_{+-} (1-\rho_+))^2},   \\
&h_{+-}=\frac{\alpha_{-+}\alpha_{+-} \rho_+ (1- \rho_+)}{(\alpha_{-+} \rho_+ + \alpha_{+-} (1-\rho_+))^2}. 
\end{align*}
Let us mention that in the case $\alpha_{+-} = \alpha_{-+} = \alpha$, they considerably simplify to
$$   h_{++} =   \rho_+^2 , \quad  h_{--}= \rho_-^2 = (1- \rho_+)^2 , \quad  h_{+-}=  \rho_+ \rho_- \rho_+ (1- \rho_+) , $$
which corresponds to  mixed states and weights. Thus, in order to understand polarization we will focus on the case $f_{+-} =0$ first.

In the case of the Kirkwood closure solutions with $f_{+-}>0$ (and thus  $h_{+-}>0$) are obtained only for $h_{++}=h_{--}=0$, which are again not the ones relevant for the many particle limit.

\subsubsection{No link creation between opposite states}

In order to understand the segregation phenomenon in the model, let us first consider the most extreme case of $\beta_{+-} = 0$, hence there are no links created between agents of opposite states $+1$ and $-1$. In this case we trivially find stationary solutions with $f_{+-} =0$:

\begin{prop} \label{minimalstatprop}
Let $\beta_{+-}=0$, $\beta_{++}+\gamma_{++} >0$, and $\beta_{--}+\gamma_{--} >0$. Then there exists an infinite number of stationary solution of \eqref{eq:minimal1}-\eqref{eq:minimal6} as well as of \eqref{eq:minimalk1}-\eqref{eq:minimalk6} with $f_{+-}=0$, $\rho_+ \in [0,1]$ arbitrary, $g_{+-} \in [0,\min\{\rho_+,1-\rho_+\}]$ arbitrary,
and 
\begin{align*}
f_{++} &= \frac{\beta_{++}}{\beta_{++}+\gamma_{++}}(\rho_+ - g_{+-}), && 
g_{++} = \frac{\gamma_{++}}{\beta_{++}+\gamma_{++}} (\rho_+ - g_{+-}),\\
f_{--} &= \frac{\beta_{--}}{\beta_{--}+\gamma_{--}} (1-\rho_+ - g_{+-}), && 
g_{--} = \frac{\gamma_{--}}{\beta_{--}+\gamma_{--}} (1-\rho_+ - g_{+-}).
 \end{align*}
\end{prop}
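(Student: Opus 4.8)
The plan is to verify directly that the stated two-parameter family consists of stationary points, the key structural fact being that \emph{every} state-flip term in both systems carries a factor of $f_{+-}$. First I would impose $f_{+-}=0$. Inspecting \eqref{eq:minimal1}--\eqref{eq:minimal6}, every term proportional to a rate $\alpha_{+-}$ or $\alpha_{-+}$ contains either $f_{+-}^2$ or $f_{+-}$ as a factor and therefore vanishes identically. The same holds for the Kirkwood system \eqref{eq:minimalk1}--\eqref{eq:minimalk6}, where the extra Radon--Nikodym factors such as $h_{++}/\rho_+^2$ are multiplied by these very $f_{+-}$-factors and are thus annihilated as well. This reflects the modelling fact that state changes occur only across active links joining opposite states, of which there are none once $f_{+-}=0$.

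Next I would treat the two $+-$ equations. With $f_{+-}=0$ and the hypothesis $\beta_{+-}=0$, the right-hand sides of the equations for $\partial_t f_{+-}$ and $\partial_t g_{+-}$ collapse to the surviving reaction terms $\pm(\beta_{+-}g_{+-}-\gamma_{+-}f_{+-})$, both of which vanish since $\beta_{+-}=0$ and $f_{+-}=0$. Hence $f_{+-}$ and $g_{+-}$ are stationary for an arbitrary admissible value of $g_{+-}$, which already exposes one free parameter.

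The remaining four equations then reduce to the pure link-formation/removal balance $\partial_t f_{++}=\beta_{++}g_{++}-\gamma_{++}f_{++}$ (with $\partial_t g_{++}=-\partial_t f_{++}$) and its $--$ analogue. Stationarity is therefore equivalent to the two detailed-balance conditions $\beta_{++}g_{++}=\gamma_{++}f_{++}$ and $\beta_{--}g_{--}=\gamma_{--}f_{--}$. I would then combine each with the constraint coming from the definitions of $\rho_\pm$: since $f_{+-}=0$ gives $f_{++}+g_{++}=\rho_+-g_{+-}$ and $f_{--}+g_{--}=\rho_- - g_{+-}=1-\rho_+-g_{+-}$, each condition becomes a linear $2\times2$ system in $(f_{\pm\pm},g_{\pm\pm})$. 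Solving, and dividing by $\beta_{++}+\gamma_{++}>0$ respectively $\beta_{--}+\gamma_{--}>0$, yields exactly the stated expressions.

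Finally I would check admissibility and count. The prefactors $\beta_{\pm\pm}/(\beta_{\pm\pm}+\gamma_{\pm\pm})$ and $\gamma_{\pm\pm}/(\beta_{\pm\pm}+\gamma_{\pm\pm})$ lie in $[0,1]$, so nonnegativity of all six unknowns is equivalent to $\rho_+-g_{+-}\ge 0$ and $1-\rho_+-g_{+-}\ge 0$, i.e. to the stated range $g_{+-}\in[0,\min\{\rho_+,1-\rho_+\}]$. Letting $\rho_+\in[0,1]$ and $g_{+-}$ in this interval vary freely produces a two-parameter continuum, hence infinitely many stationary solutions. I do not expect a serious obstacle, since this is a verification rather than a construction; the only point requiring care is to confirm that the $\alpha$-terms genuinely vanish in the Kirkwood case despite the denominators $\rho_\pm^2$, which forces the mild caveat that at the boundary values $\rho_+\in\{0,1\}$ the corresponding empty sub-population makes those quotients interpreted by continuity, their numerators carrying the factor $f_{+-}=0$.
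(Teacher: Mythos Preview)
Your proposal is correct and is precisely the direct verification the paper has in mind; the paper gives no explicit proof, introducing the proposition with ``we trivially find stationary solutions with $f_{+-}=0$'' and leaving the details to the reader. Your additional remark about interpreting the Kirkwood denominators at $\rho_+\in\{0,1\}$ by continuity is a reasonable caveat that the paper does not comment on.
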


The above type of stationary solutions yields a polarization, i.e. unless $\rho_ + = 1$ or $\rho_+=0$ the network consists of two parts with states $+1$ and $-1$, and the corresponding nodes with different states are never connected by an edge. 
From the linearized equations of \eqref{eq:minimal1}-\eqref{eq:minimal6} around the stationary state (see Appendix \ref{appendixminimal}) we expect the polarization to be stable if
\begin{equation}
\gamma_{+-} >  \frac{\beta_{++}}{\beta_{++}+\gamma_{++}} \frac{ \alpha_{+-} \alpha_{-+}^2 \rho_+}{2 (\alpha_{-+} \rho_+ + \alpha_{+-} (1-\rho_+))^2}  +  \frac{\beta_{--}}{\beta_{--}+\gamma_{--}} \frac{\alpha_{-+} \alpha_{+-}^2 (1-\rho_+)^2}{2(\alpha_{-+} \rho_+ + \alpha_{+-} (1-\rho_+))^2}. \label{minimallinstabcondition}
\end{equation} 
The linearized problem has a threefold zero eigenvalue however, which makes a rigorous analysis based on linear stability difficult. However, we can provide a nonlinear stability result under a slightly stronger condition.
An analogous result with slightly different condition holds for the linearization of \eqref{eq:minimalk1}-\eqref{eq:minimalk6}.
%
%
We see that a sufficient condition for \eqref{minimallinstabcondition} is $ 2 \gamma_{+-} > \alpha_{+-} + \alpha_{-+}$, which is even sufficient for nonlinear stability of a polarized state:
\begin{thm} \label{thm:nonlinstab}
Let $ 2 \gamma_{+-} > \alpha_{+-} + \alpha_{-+}$ and let $(f_{++},g_{++},f_{--},g_{--},f_{+-},g_{+-})$ be a nonnegative solution of \eqref{eq:minimal1}-\eqref{eq:minimal6}. Then 
$$ f_{+-}(t) \leq e^{-(\gamma_{+-}- \frac{\alpha_{+-} + \alpha_{-+}}2)t} f_{+-}(0), $$ 
\end{thm}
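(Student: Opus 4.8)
The plan is to collapse the whole six-component system down to a single scalar differential inequality for $f_{+-}$ and then integrate it. Since the statement lives in the regime $\beta_{+-}=0$ of this subsubsection, I would first substitute $\beta_{+-}=0$ into the evolution equation for $f_{+-}$ in \eqref{eq:minimal1}--\eqref{eq:minimal6}, which removes the nonnegative source and leaves $\partial_t f_{+-} = -\alpha_{-+}\frac{f_{+-}^2}{2\rho_-} + \alpha_{+-}\frac{f_{++}f_{+-}}{2\rho_+} - \alpha_{+-}\frac{f_{+-}^2}{2\rho_+} + \alpha_{-+}\frac{f_{--}f_{+-}}{2\rho_-} - \gamma_{+-}f_{+-}$. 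This reduction is essential: the discarded term $\beta_{+-}g_{+-}$ is nonnegative and would otherwise obstruct pure exponential decay, so the standing assumption of this subsubsection is exactly what makes the bound possible.

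The key observation is that every surviving term is proportional to $f_{+-}$, so I would factor $f_{+-}$ out and estimate the resulting bracket from above. Two of the four interaction terms, namely $-\alpha_{-+}\frac{f_{+-}}{2\rho_-}$ and $-\alpha_{+-}\frac{f_{+-}}{2\rho_+}$, are nonpositive because the solution is nonnegative and $\rho_\pm>0$ on the interval of existence; these I would simply drop. For the remaining two I would use the definitions $\rho_+ = f_{++}+g_{++}+f_{+-}+g_{+-}$ and $\rho_- = f_{--}+g_{--}+f_{+-}+g_{+-}$, which together with nonnegativity give $f_{++}\le\rho_+$ and $f_{--}\le\rho_-$, hence $\frac{f_{++}}{\rho_+}\le 1$ and $\frac{f_{--}}{\rho_-}\le 1$. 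This produces the linear inequality $\partial_t f_{+-} \le -\bigl(\gamma_{+-} - \frac{\alpha_{+-}+\alpha_{-+}}{2}\bigr) f_{+-}$, whose coefficient is strictly negative precisely under the hypothesis $2\gamma_{+-} > \alpha_{+-}+\alpha_{-+}$.

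The final step is a Grönwall / integrating-factor argument: multiplying by $e^{(\gamma_{+-}-(\alpha_{+-}+\alpha_{-+})/2)t}$ and integrating yields the claimed estimate $f_{+-}(t)\le e^{-(\gamma_{+-}-(\alpha_{+-}+\alpha_{-+})/2)t}f_{+-}(0)$. To make this rigorous I would invoke Theorem \ref{minimalexistencethm}: since $\rho_+(0),\rho_-(0)\in(0,1)$, the solution stays smooth with $\rho_+\rho_->0$ on $[0,T_*)$, so the quotients are well defined there; and if either $\rho_+$ or $\rho_-$ degenerates then the corresponding bound forces $f_{+-}=0$ and the estimate is trivial.

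There is no deep obstacle here once the scalar reduction is carried out; the only points requiring genuine care are the well-posedness of $f_{++}/\rho_+$ and $f_{--}/\rho_-$ across the full existence interval, and verifying that the two discarded quadratic terms indeed have nonpositive sign. The reason the resulting rate is exactly $\gamma_{+-}-\frac{\alpha_{+-}+\alpha_{-+}}{2}$ rather than the sharper linearized threshold \eqref{minimallinstabcondition} is that the bounds $f_{++}\le\rho_+$ and $f_{--}\le\rho_-$ are deliberately crude; retaining the stationary ratios $f_{++}/\rho_+$ and $f_{--}/\rho_-$ instead would recover the weaker condition of \eqref{minimallinstabcondition}, which is why the theorem advertises a sufficient but non-sharp criterion.
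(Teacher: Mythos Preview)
Your proposal is correct and matches the paper's proof essentially line for line: the paper also writes the $f_{+-}$ equation with $\beta_{+-}=0$, drops the two nonpositive quadratic terms, uses $f_{++}\le\rho_+$ and $f_{--}\le\rho_-$ to bound the bracket by $\tfrac{\alpha_{+-}+\alpha_{-+}}{2}-\gamma_{+-}$, and finishes with Gronwall's lemma. Your additional care about the well-definedness of the quotients via Theorem~\ref{minimalexistencethm} and your closing remark on why the rate is cruder than \eqref{minimallinstabcondition} go slightly beyond what the paper spells out, but the core argument is identical.
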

in particular $f_{+-}(t) \rightarrow 0 $ as $t \rightarrow \infty$.
\begin{proof}
Using the nonnegativity of the solution we have $f_{++} \leq \rho_+$ and $f_{--} \leq \rho_-$ and thus
\begin{align*} \partial_t f_{+-} =&  - \alpha_{-+} \frac{f_{+-}^2}{2\rho_-}  - \alpha_{+-} \frac{f_{+-}^2}{2\rho_+} + \left( \alpha_{+-} \frac{f_{++} }{2\rho_+} + \alpha_{-+} \frac{f_{--} }{2\rho_-}  - \gamma_{+-} \right) f_{+-} \\ 
\leq& \left( \frac{\alpha_{+-} }{2 } +  \frac{\alpha_{-+} }{2 }  - \gamma_{+-} \right) f_{+-} ,
\end{align*}
which implies the assertion due to Gronwall's Lemma.
\end{proof}

An analogous result can be obtained for the Kirkwood closure, with a slightly stronger condition:
\begin{thm}
Let $ \gamma_{+-} > \alpha_{+-} + \alpha_{-+}$ and let $(f_{++},g_{++},f_{--},g_{--},f_{+-},g_{+-})$ be a nonnegative solution of \eqref{eq:minimalk1}-\eqref{eq:minimalk6}. Then 
$$ f_{+-}(t) \leq e^{-(\gamma_{+-}- {\alpha_{+-} + \alpha_{-+}})t} f_{+-}(0), $$ 
\end{thm}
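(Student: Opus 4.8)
The plan is to follow the same scheme as the proof of Theorem~\ref{thm:nonlinstab}, now applied to the fifth equation of the Kirkwood system \eqref{eq:minimalk1}--\eqref{eq:minimalk6}, and working on the maximal existence interval from Theorem~\ref{minimalexistencethm} where $\rho_+,\rho_->0$ so that all quotients are well defined. In the regime $\beta_{+-}=0$ relevant to this subsection (so that the term $\beta_{+-}g_{+-}$ drops out, exactly as in the previous proof), the equation for $f_{+-}$ can be rearranged, after grouping the two coupling terms, as
\begin{align*}
\partial_t f_{+-} =& - \alpha_{-+} \frac{f_{+-}^2}{2\rho_-}\frac{h_{++}}{\rho_+^2} - \alpha_{+-} \frac{f_{+-}^2}{2\rho_+}\frac{h_{--}}{\rho_-^2} \\
& + \frac{h_{+-}}{2\rho_+\rho_-}\left( \alpha_{+-} \frac{f_{++}}{\rho_+} + \alpha_{-+} \frac{f_{--}}{\rho_-}\right) f_{+-} - \gamma_{+-} f_{+-}.
\end{align*}
The first step is to discard the two quadratic terms on the first line: by nonnegativity of the solution and of $h_{++},h_{--},\rho_+,\rho_-$, both are nonpositive and may be dropped in an upper bound for $\partial_t f_{+-}$.

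The heart of the argument is to control the positive coupling coefficient $\frac{h_{+-}}{2\rho_+\rho_-}\bigl( \alpha_{+-} \frac{f_{++}}{\rho_+} + \alpha_{-+} \frac{f_{--}}{\rho_-}\bigr)$. Using $f_{++}\le \rho_+$ and $f_{--}\le\rho_-$ the bracket is bounded by $\alpha_{+-}+\alpha_{-+}$, exactly as in the conditional-closure case. The genuinely new feature is the prefactor $\frac{h_{+-}}{2\rho_+\rho_-}$, which I would show to be at most $1$. To this end I would use the elementary identities $h_{+-}=\rho_+-h_{++}\le\rho_+$ and $h_{+-}=\rho_--h_{--}\le\rho_-$, giving $h_{+-}\le\min\{\rho_+,\rho_-\}$, together with the conservation of total mass $\rho_++\rho_-=1$ established earlier (the initial normalization propagated by $\partial_t(\rho_++\rho_-)=0$). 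Hence $\frac{h_{+-}}{\rho_+\rho_-}\le\frac{\min\{\rho_+,\rho_-\}}{\rho_+\rho_-}=\frac{1}{\max\{\rho_+,\rho_-\}}\le 2$, so the prefactor is $\le 1$ and the full coupling coefficient is bounded by $\alpha_{+-}+\alpha_{-+}$.

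Combining these estimates yields the differential inequality $\partial_t f_{+-}\le(\alpha_{+-}+\alpha_{-+}-\gamma_{+-})f_{+-}$, and Gronwall's Lemma gives the asserted bound $f_{+-}(t)\le e^{-(\gamma_{+-}-\alpha_{+-}-\alpha_{-+})t}f_{+-}(0)$, whence $f_{+-}(t)\to 0$. The step I expect to carry the real content, and the only genuine departure from the proof of Theorem~\ref{thm:nonlinstab}, is the estimate $\frac{h_{+-}}{\rho_+\rho_-}\le 2$: it is precisely this factor of $2$ lost in the Kirkwood closure that explains the strengthening of the hypothesis from $2\gamma_{+-}>\alpha_{+-}+\alpha_{-+}$ to $\gamma_{+-}>\alpha_{+-}+\alpha_{-+}$. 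I would also remark that the constant $2$ is sharp, being approached in the balanced degenerate configuration $\rho_+=\rho_-=\frac12$ with $h_{++}=h_{--}=0$ (the two-particle deficiency of the Kirkwood closure noted above), so that no weaker hypothesis is obtainable by this route.
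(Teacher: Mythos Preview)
Your proof is correct and follows essentially the same approach as the paper. The paper's proof simply states that it is analogous to that of Theorem~\ref{thm:nonlinstab} together with the estimate $\frac{h_{+-}}{\rho_+\rho_-}\le\min\{\frac{1}{\rho_+},\frac{1}{\rho_-}\}\le 2$, which is exactly the bound you derive (your observation that $h_{+-}\le\min\{\rho_+,\rho_-\}$ combined with $\max\{\rho_+,\rho_-\}\ge\tfrac12$ is the same computation); your explicit remark on the sharpness of the constant $2$ and its relation to the strengthened hypothesis is additional commentary not present in the paper but consistent with it.
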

in particular $f_{+-}(t) \rightarrow 0 $ as $t \rightarrow \infty$.
\begin{proof}
The proof is analogous to the one of Theorem \ref{thm:nonlinstab} using the following estimate for the additional term
$$ \frac{h_{+-}}{\rho_+ \rho_-} \leq \min\{ \frac{1}{\rho_+}, \frac{1}{\rho_+} \} \leq 2. $$
\end{proof}

\subsubsection{Rare link creation between opposite states}

Having understood the above extreme case of having no link creation between nodes of opposite states, we can also extend to the case of rare link creation, i.e. $\beta_{+-}$ small. For simple notation we use the notation $\epsilon = \beta_{+-}$ and perform an asymptotic analysis around $\epsilon = 0$ via the implicit function theorem. 

\begin{prop} \label{smallepsilonprop}
Let all parameters $\alpha_{**}, \beta_{**}, \gamma_{**}$ be positive and let
$$ 2 \gamma_{+-} > \alpha_{+-} + \alpha_{-+}.$$ Then there exists $\epsilon_0 > 0$ such that for all $\gamma_{+-}=\epsilon \in [0,\epsilon_0)$  there exists an infinite number of stationary solution of \eqref{eq:minimal1}-\eqref{eq:minimal6} with  $\rho_+ \in [0,1]$ and $f_{+-} = {\cal O}(\epsilon)$.  
\end{prop}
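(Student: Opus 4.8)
The plan is to produce the family by continuation from the $\epsilon=0$ solutions of Proposition \ref{minimalstatprop} through the implicit function theorem, using $\rho_+\in[0,1]$ as the label of the branch. Writing the right-hand sides of \eqref{eq:minimal1}--\eqref{eq:minimal6} as a map $G(u,\epsilon)=0$ with $u=(f_{++},g_{++},f_{--},g_{--},f_{+-},g_{+-})$ and $\epsilon=\beta_{+-}$, I would first impose the normalisation $\rho_++\rho_-=1$ and record the algebraic relations visible at the base point $f_{+-}=0$, namely $\beta_{++}g_{++}=\gamma_{++}f_{++}$ and $\beta_{--}g_{--}=\gamma_{--}f_{--}$, which fix the splitting of $h_{\pm\pm}=f_{\pm\pm}+g_{\pm\pm}$. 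This isolates the mixed block $(f_{+-},g_{+-})$ as the only nontrivial part of the continuation.

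The obstacle is that the $\epsilon=0$ solution set is not isolated: it is the two-parameter family of Proposition \ref{minimalstatprop} (indexed by $\rho_+$ and $g_{+-}$), so $D_uG$ has a two-dimensional kernel and the implicit function theorem does not apply directly. I keep $\rho_+$ fixed --- this is exactly the parameter along which I want the infinite family --- and I must resolve the remaining, spurious degeneracy in $g_{+-}$. The decisive structural fact is that the entire $f_{+-}$-equation vanishes at $(f_{+-},\epsilon)=(0,0)$, since its source $\beta_{+-}g_{+-}=\epsilon g_{+-}$ and every other term is a multiple of $f_{+-}$. I would therefore desingularise by the blow-up $f_{+-}=\epsilon\phi$ and divide the $f_{+-}$- and $g_{+-}$-equations by $\epsilon$; in the unknowns $(f_{++},g_{++},f_{--},g_{--},\phi,g_{+-})$ the system then extends smoothly to $\epsilon=0$ and is nondegenerate in the directions transverse to the $\rho_+$-family.

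The heart of the argument is the invertibility of this reduced (transverse) Jacobian, and this is where the hypothesis enters. Linearising the rescaled $f_{+-}$-equation in $\phi$ at $\epsilon=0$ produces the coefficient
\[
\gamma_{+-}-\frac{\alpha_{+-}}{2}\frac{f_{++}}{\rho_+}-\frac{\alpha_{-+}}{2}\frac{f_{--}}{\rho_-},
\]
the same expression controlled in Theorem \ref{thm:nonlinstab}; using $f_{++}\le\rho_+$ and $f_{--}\le\rho_-$ it is bounded below by $\gamma_{+-}-\tfrac12(\alpha_{+-}+\alpha_{-+})$, which is strictly positive precisely under the assumption $2\gamma_{+-}>\alpha_{+-}+\alpha_{-+}$. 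With this nonvanishing entry the transverse block is invertible, so for each fixed $\rho_+\in(0,1)$ the implicit function theorem yields a branch that is smooth in $\epsilon$ on some interval $[0,\epsilon_0)$ and satisfies $f_{+-}=\epsilon\phi={\cal O}(\epsilon)$. Since the lower bound $\gamma_{+-}-\tfrac12(\alpha_{+-}+\alpha_{-+})$ does not depend on $\rho_+$, the threshold $\epsilon_0$ can be taken uniform on compact subsets of $(0,1)$, so that for each such $\epsilon$ the whole range of $\rho_+$ furnishes the claimed infinite family (the endpoints $\rho_+\in\{0,1\}$ being the consensus states, which are stationary for every $\epsilon$).

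I expect the only genuinely nontrivial steps to be the desingularisation and the verification that the reduced Jacobian is nonsingular; once these are in place the remaining claims follow routinely. Nonnegativity of the constructed solution for small $\epsilon$ then follows by continuity: at the base point the four same-state entries are strictly positive for $\rho_+\in(0,1)$ and survive the ${\cal O}(\epsilon)$ perturbation, while $f_{+-}={\cal O}(\epsilon)\ge0$ and $g_{+-}\ge0$ are read off from the leading order of the branch. As a consistency check one can use \eqref{eq:rhoplusevolution}, which shows that stationarity forces $(\alpha_{-+}-\alpha_{+-})f_{+-}=0$: when $\alpha_{+-}\ne\alpha_{-+}$ the branch degenerates to the explicit solution $f_{+-}=g_{+-}=0$, whereas the genuinely ${\cal O}(\epsilon)$-nonzero mixed mass occurs in the balanced case $\alpha_{+-}=\alpha_{-+}$.
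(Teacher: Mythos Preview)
Your approach is the same as the paper's --- fix $\rho_+$, eliminate two variables via $\rho_+ + \rho_- = 1$, and apply the implicit function theorem around the $\epsilon=0$ family of Proposition~\ref{minimalstatprop} --- but you are more careful on the one point where the paper is imprecise. The paper simply asserts that ``the linearization at $\epsilon = 0$ \ldots\ is regular''; in fact, at any base point with $f_{+-}=0$ the two rows of the $4\times4$ Jacobian coming from the $f_{+-}$- and $g_{+-}$-equations have nonzero entries only in the $f_{+-}$-column, so the matrix is singular. Your blow-up $f_{+-}=\epsilon\phi$ and division of those two equations by $\epsilon$ is exactly the desingularisation needed to make the implicit function theorem apply, and after it the $(\phi,g_{+-})$-block has determinant $-(\alpha_{+-}+\alpha_{-+})/2\neq0$. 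The paper effectively uses the same rescaling in its final displayed identity $(\ldots)f_{+-}=\epsilon g_{+-}$ when computing $df_{+-}/d\epsilon(0)$, but does not spell out that this is what repairs the degenerate Jacobian. Your closing consistency check via \eqref{eq:rhoplusevolution}, separating the genuinely $\mathcal{O}(\epsilon)$-nonzero case $\alpha_{+-}=\alpha_{-+}$ from the trivial branch $f_{+-}=g_{+-}=0$ when $\alpha_{+-}\neq\alpha_{-+}$, is a point the paper does not address.
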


Let us remark that the condition $2 \gamma_{+-} > \alpha_{+-} + \alpha_{-+}$ is not optimal in Proposition \ref{smallepsilonprop}, it was just used to obtain a condition that holds independent of $\rho_+$. If we are interested in the behaviour at a specific value of $\rho_+$ only it could indeed be replaced by \eqref{minimallinstabcondition}. With an analogous proof we can give a statement for the Kirkwood closure:

\begin{prop}  
Let all parameters $\alpha_{**}, \beta_{**}, \gamma_{**}$ be positive and let
$$   \gamma_{+-} > \alpha_{+-} + \alpha_{-+}.$$ Then there exists $\epsilon_0 > 0$ such that for all $\gamma_{+-}=\epsilon \in [0,\epsilon_0)$  there exists an infinite number of stationary solution of \eqref{eq:minimalk1}-\eqref{eq:minimalk6} with  $\rho_+ \in [0,1]$ and $f_{+-} = {\cal O}(\epsilon)$.  
\end{prop}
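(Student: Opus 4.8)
The plan is to mirror the proof of Proposition~\ref{smallepsilonprop}: continue the explicit $\epsilon=0$ family by the implicit function theorem, the only new ingredient being that the Kirkwood factors must be tracked. At $\epsilon=\beta_{+-}=0$ the stationary solutions of \eqref{eq:minimalk1}--\eqref{eq:minimalk6} with $f_{+-}=0$ are precisely those described in Proposition~\ref{minimalstatprop}: every $\alpha$-term in \eqref{eq:minimalk1}--\eqref{eq:minimalk6} carries a factor $f_{+-}$, so putting $f_{+-}=0$ decouples state switching from the link dynamics and leaves the relaxation balances $\gamma_{++}f_{++}=\beta_{++}g_{++}$ and $\gamma_{--}f_{--}=\beta_{--}g_{--}$. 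This base family is parameterised by $\rho_+\in[0,1]$ and $g_{+-}$. I would first discard the endpoints $\rho_+\in\{0,1\}$, where $f_{+-}\le\min\{\rho_+,\rho_-\}=0$ forces consensus and the claim is trivial, and fix $\rho_+\in(0,1)$ so that $\rho_+$ and $\rho_-=1-\rho_+$ stay bounded away from $0$.

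For fixed $\rho_+$ I would write the stationary system as $G(x,\epsilon)=0$ with $x=(f_{++},g_{++},f_{--},g_{--},f_{+-},g_{+-})$, eliminating one redundant equation through the identity $\partial_t h_{+-}=-\tfrac12(\partial_t h_{++}+\partial_t h_{--})$ (mass conservation) together with the normalisation $\rho_++\rho_-=1$. The degeneracy to be handled is that the $\epsilon=0$ zero set is not isolated: besides $\rho_+$ it still carries the free direction $g_{+-}$. I would resolve this by keeping $g_{+-}$ (equivalently $h_{+-}$) as a second parameter and solving, via the implicit function theorem, for the remaining unknowns, the genuinely new one being $f_{+-}$ together with the slaved corrections of $f_{\pm\pm},g_{\pm\pm}$. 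On the blocks $(f_{\pm\pm},g_{\pm\pm})$ the linearisation is the relaxation operator with entries $-\gamma_{\pm\pm},\beta_{\pm\pm},\gamma_{\pm\pm},-\beta_{\pm\pm}$, which is invertible once the conserved combinations $h_{\pm\pm}$ have been pinned down by $\rho_+$ and $g_{+-}$; hence the only direction in which nondegeneracy must be verified is $f_{+-}$.

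The crucial computation is therefore the $f_{+-}$-entry. Linearising the $f_{+-}$-equation in \eqref{eq:minimalk1}--\eqref{eq:minimalk6} at the base solution (where $f_{+-}=0$, so $h_{+-}=g_{+-}$), the quadratic terms drop out and the coefficient of $f_{+-}$ is
$$ \alpha_{+-}\frac{f_{++}}{2\rho_+}\frac{h_{+-}}{\rho_+\rho_-}+\alpha_{-+}\frac{f_{--}}{2\rho_-}\frac{h_{+-}}{\rho_+\rho_-}-\gamma_{+-}. $$
Using $f_{++}\le\rho_+$, $f_{--}\le\rho_-$ and the bound $\frac{h_{+-}}{\rho_+\rho_-}\le\min\{\rho_+^{-1},\rho_-^{-1}\}\le 2$ already exploited in the preceding Kirkwood stability theorem, this coefficient is at most $\alpha_{+-}+\alpha_{-+}-\gamma_{+-}<0$ under the hypothesis $\gamma_{+-}>\alpha_{+-}+\alpha_{-+}$---which is exactly why the Kirkwood threshold is stronger than the $2\gamma_{+-}>\alpha_{+-}+\alpha_{-+}$ of Proposition~\ref{smallepsilonprop}. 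Thus $\partial_x G(x,0)$ is invertible, the implicit function theorem supplies a branch $x(\epsilon)$ emanating from the base solution, and since $\partial_\epsilon G$ enters only through the term $\beta_{+-}g_{+-}=\epsilon g_{+-}$ while $f_{+-}(0)=0$, the expansion yields $f_{+-}(\epsilon)=\mathcal{O}(\epsilon)$.

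The main obstacle is precisely the non-isolatedness of the $\epsilon=0$ solutions: one must set up the reduction so that $\rho_+$ and $g_{+-}$ play the role of parameters and the residual map becomes nondegenerate, and one must check that the negativity bound above is \emph{uniform} in $\rho_+\in(0,1)$. The latter holds because $\frac{h_{+-}}{\rho_+\rho_-}\le 2$ on the whole range, which is exactly what allows $\epsilon_0$ to be chosen independently of $\rho_+$ (as noted after Proposition~\ref{smallepsilonprop}). Letting $\rho_+$ vary over $(0,1)$ then produces an infinite family of stationary solutions with $f_{+-}=\mathcal{O}(\epsilon)$, which together with the consensus endpoints proves the statement.
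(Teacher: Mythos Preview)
The paper does not give a separate proof for this proposition; it only states that the result follows ``with an analogous proof'' to Proposition~\ref{smallepsilonprop}. Your proposal does precisely that: it perturbs the explicit $\epsilon=0$ family of Proposition~\ref{minimalstatprop} via the implicit function theorem, and identifies the only new ingredient---the extra Kirkwood factor $\frac{h_{+-}}{\rho_+\rho_-}\le 2$ in the $f_{+-}$-coefficient---as the reason for the stronger hypothesis $\gamma_{+-}>\alpha_{+-}+\alpha_{-+}$ in place of $2\gamma_{+-}>\alpha_{+-}+\alpha_{-+}$. This is exactly the route the paper intends.

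One small difference worth noting: in the proof of Proposition~\ref{smallepsilonprop} the paper fixes only $\rho_+$, eliminates $g_{++},g_{--}$, and works with the four unknowns $(f_{++},f_{--},f_{+-},g_{+-})$, asserting regularity of the resulting $4\times 4$ linearisation. You instead fix \emph{both} $\rho_+$ and $g_{+-}$ as parameters before invoking the implicit function theorem. Your choice is arguably the cleaner one, since at $\epsilon=0$ the stationary set in Proposition~\ref{minimalstatprop} is genuinely two-dimensional (parametrised by $\rho_+$ and $g_{+-}$), so after fixing only $\rho_+$ the base point is still non-isolated and the naive Jacobian has a kernel in the $g_{+-}$-direction. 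Treating $g_{+-}$ as a parameter removes this degeneracy explicitly and makes the nondegeneracy check reduce to the single scalar coefficient you compute. The two approaches lead to the same branches of solutions; yours just makes the bookkeeping of the zero eigenvalues more transparent.
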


\section{Vlasov-type model}

In the following we further investigate the paradigmatic model based on the Vlasov-type dynamics, corresponding to the microscopic model \eqref{eq:microscopic1}, \eqref{eq:microscopic2} respectively the Liouville equatione \eqref{eq:liouville} and the corresponding infinite hierarchy, given in weak formulation 
\begin{align} 
& \int_0^T \int \partial_t \varphi_k(z_k,t) \mu^{\infty:k}_t (dz_k)   +   \sum_{i \leq k} 
\int_0^T \int \nabla_{s_i} \varphi_k(z_k,t)\cdot U(s_i,s_{k+1},w_{ik+1}) \mu_t^{\infty:k+1}(dz_{k+1} )    
  \nonumber \\ & \qquad \qquad  + \sum_{i \leq k}  \sum_{j \neq i, j \leq k} \int_0^T \int  \nabla_{w_{ij}} 
	\varphi_k(z_k,t) \cdot  V(s_i,s_j,w_{ij}) \mu_t^{\infty:k}(dz_k )  = 0 ,  \label{eq:hierarchyweak}
\end{align} 
with continuously differentiable test functions $\varphi_k$ depending on $z_k$. We start with a case where some kind of propagation of chaos exists, namely if the weights are fully concentrated.

\subsection{Mean-Field Models for Weight Concentration}

As mentioned above there is no propagation of chaos respectively a simple mean-field solution of the infinite hierarchy in general, but we can find such if there is concentration of weights. The latter is not surprising, since once we find weight concentration, the resulting hierarchy is effectively rewritten in terms of the states $s_i$ only. Such states can be found for the paradigmatic model only and are related to special solutions of the form 
$w_{ij}=W(s_i,s_j(t),t)$ of \eqref{eq:microscopic1}, \eqref{eq:microscopic2}. Essentially we look for solutions of
$$ \frac{d}{dt}W(s_i(t),s_j(t),t)  = V(s_i,s_j,W(s_i(t),s_j(t),t)), $$
which by the chain rule is converted to a transport equation for $W$.

In order to construct weight concentrated mean-field solutions, we look for measures of the form
$$ \mu^{\infty:k}_t(dz_k) = \lambda^k_t(ds_1,\ldots,ds_k) \prod_{i\leq k}\prod_{j \neq i,j \leq k} \delta_{W(s_i,s_j,t)}(dw_{ij}), $$
with some function $W:{\cal S} \times {\cal S} \times (0,T) \rightarrow {\cal W}$ encoding the weights between states $s_i$ and $s_j$. We find
$$ \int_0^T \int  \varphi_k(z_k,t) \mu^{\infty:k}_t(dz_k) = \int_0^T \int \psi_k(s_1,\ldots,s_k,t)\lambda^k_t(ds_1,\ldots,ds_k),$$
where we introduce the short-hand notation
$$ \psi_k(s_1,\ldots,s_k,t) = \varphi_k(s_1,\ldots,s_k,W(s_1,s_2,t),\ldots,W(s_{k-1},s_k,t),t). $$
We notice that 
$$ \partial_t \psi_k = \partial_t \varphi_k + \sum_i \sum_{j \neq i} \partial_{w_{ij}} \varphi_k \cdot \partial_t W(s_i,s_j,t)   $$
and 
$$ \nabla_{s_i} \psi_k = \nabla_{s_i} \varphi_k + \sum_{j \neq i} \partial_{w_{ij}} \varphi_k \cdot \nabla_{s_1} W(s_i,s_j,t) + \sum_{j \neq i} \partial_{w_{ji}} \varphi_k \cdot \nabla_{s_2} W(s_j,s_i,t) . $$
Inserting these relations for $\partial_t \varphi_k$ and $\nabla_{s_i} \varphi_k$ into \eqref{eq:hierarchyweak} we obtain
\begin{align} 
 & \int_0^T \int \partial_t \psi_k(z_k,t) \lambda^{k}_t(ds_1,\ldots,ds_k)  \nonumber\\ &\qquad  +   \sum_{i \leq k} 
\int_0^T \int \nabla_{s_i} \psi_k(z_k,t)\cdot U(s_i,s_{k+1},W(s_i,s_{k+1},t)) \lambda_t^{k+1}(ds_1,\ldots,ds_{k+1} )
     = R ,
\end{align} 
with the remainder term
\begin{align*} 
R_k =&  \sum_{i \leq k} \sum_{j \neq i} \int_0^T \int \partial_{w_{ij}} \varphi_k \cdot \partial_t W(s_i,s_j,t) \lambda^{k}_t(ds_1,\ldots,ds_k) 
+ \\ &\sum_{i \leq k} \sum_{j \neq i}  \int_0^T \int \partial_{w_{ij}} \varphi_k \cdot \nabla_{s_1} W(s_i,s_j,t)  U(s_i,s_{k+1},W(s_i,s_{k+1},t)) \lambda_t^{k+1}(ds_1,\ldots,ds_{k+1}) + \\ &\sum_{i \leq k} \sum_{j \neq i}  \int_0^T \int \partial_{w_{ij}} \varphi_k \cdot \nabla_{s_2} W(s_i,s_j,t)  U(s_j,s_{k+1},W(s_j,s_{k+1},t)) \lambda_t^{k+1}(ds_1,\ldots,ds_{k+1} ) - \\&
\sum_{i \leq k} \sum_{j \neq i}  \int_0^T \int \partial_{w_{ij}} \varphi_k \cdot V(s_i,s_j,W(s_i,s_j,t)) \lambda^{k}_t(ds_1,\ldots,ds_k) .
\end{align*}

Now we see that there is a special solution of the form 
$$ \lambda^{k}_t(ds_1,\ldots,ds_k) = \prod_{i=1}^k  \lambda_t^1(ds_i), $$
with $\lambda_t^1$ solving the mean-field equation
\begin{equation} \label{eq:charweightconcentration}
 \partial_t \lambda_t^1  +     \nabla_{s_1} \cdot \left[ \lambda_t^1  \int U(s_1,s_2,W(s_1,s_2)) \lambda_t^{1}(ds_2) \right]    = 0.
\end{equation} 
and $R_k$ vanishes if $W$ solves the transport equation
\begin{align*}
\partial_t W(s_1,s_2,t) +  \int U(s_1,s_3,W(s_1,s_3)) \lambda_t^{1}(ds_3) \cdot \nabla_{s_1} W(s_1,s_2,t)   &\\   +\int U(s_2,s_3,W(s_1,s_3)) \lambda_t^{1}(ds_3) \cdot \nabla_{s_2} W(s_1,s_2,t)
&= V(s_1,s_2,W(s_1,s_2,t)) .
\end{align*}

Let us mention that we can solve \eqref{eq:charweightconcentration} by the method of characteristics
\begin{align}
\partial_t S(t;s_1) &= \int  U(S(t;s_1),S(t;s'),\hat W(t;s_1,s'))  \lambda^1_0(ds') \label{eq:char1wc} \\
\partial_t \hat W(t;s_1,s_2) &= V(S(t;s_1),S (t;s_2),\hat W(t;s_1,s_2)) \label{eq:char2wc}
\end{align}
with $\hat W(t;s_1,s_2) := W(S(t;s_1);S(t;s_2). $

\begin{prop} \label{weightconcentrationproof}
Let $U$ and $V$ be Lipschitz continuous and let $\lambda^1_0 \in {\cal M}_+(\R^m))$.
Then there exists a unique solution 
$$ S \in C^1(0,T;C(\R^m)), \qquad \hat W \in C^1(0,T;C(\R^{2m}))$$
of the characteristic system \eqref{eq:char1wc}, \eqref{eq:char2wc}. Moreover, $s \mapsto S(t;s)$ is a one-to-one map on $\R^{m}$ for each $t > 0$.
\end{prop}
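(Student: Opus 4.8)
```latex
\noindent\textbf{Proof proposal.} The plan is to read the characteristic system \eqref{eq:char1wc}, \eqref{eq:char2wc} as a single ordinary differential equation in a Banach space of continuous functions, and then to apply a Picard--Lindel\"of / contraction-mapping argument in that space. Concretely, I would work with the product space $X = C(\R^m;\R^m) \times C(\R^{2m};\R)$ and regard the pair $(S(t;\cdot),\hat W(t;\cdot,\cdot))$ as a single unknown $\Phi(t) \in X$. The right-hand sides define a map $\mathcal{F}\colon X \to X$ whose first component is $s_1 \mapsto \int U(S(s_1),S(s'),\hat W(s_1,s'))\,\lambda^1_0(ds')$ and whose second component is $(s_1,s_2)\mapsto V(S(s_1),S(s_2),\hat W(s_1,s_2))$. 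Since $\lambda^1_0 \in \mathcal{M}_+(\R^m)$ is a finite measure, the integral in the first component is well defined and, using the Lipschitz continuity of $U$ and $V$, one checks that $\mathcal{F}$ maps bounded sets of $X$ into bounded sets and is Lipschitz on such sets.

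The key steps I would carry out are as follows. First I would equip $X$ with the supremum norm (or a suitable weighted sup-norm) and verify that $\mathcal{F}$ is locally Lipschitz: the estimate for the $S$-component uses $|U(a_1,b_1,c_1)-U(a_2,b_2,c_2)| \le L_U(|a_1-a_2|+|b_1-b_2|+|c_1-c_2|)$ together with the finite total mass $\lambda^1_0(\R^m)$, while the $\hat W$-component is handled directly by the Lipschitz bound on $V$. Second, by the abstract Picard--Lindel\"of theorem for ODEs in Banach spaces, local existence and uniqueness of a solution $\Phi \in C^1(0,T;X)$ follows, which gives exactly the claimed regularity $S \in C^1(0,T;C(\R^m))$ and $\hat W \in C^1(0,T;C(\R^{2m}))$. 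Third, to obtain a solution on all of $(0,T)$ rather than merely locally, I would derive an a priori bound: using again the Lipschitz (hence at most linear) growth of $U$ and $V$, a Gr\"onwall argument controls $\|\Phi(t)\|$ on finite time intervals and rules out blow-up, extending the solution to any $T$.

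For the final assertion that $s \mapsto S(t;s)$ is one-to-one, I would argue by the uniqueness of the underlying characteristic flow. For two initial points $s_1 \neq s_1'$, consider the difference $S(t;s_1) - S(t;s_1')$; differentiating in $t$ and using the Lipschitz bound on $U$ gives a differential inequality of the form $\frac{d}{dt}|S(t;s_1)-S(t;s_1')| \ge -C\,|S(t;s_1)-S(t;s_1')|$, so that $|S(t;s_1)-S(t;s_1')| \ge e^{-Ct}|s_1 - s_1'| > 0$ for all $t$. Hence the map cannot collapse distinct initial states, which is precisely injectivity. Here one must be slightly careful that the coupling through $\hat W$ and through the integral term does not spoil the lower bound; I would absorb these into the constant $C$ using the global Lipschitz hypotheses.

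The main obstacle I anticipate is the infinite-dimensionality of the state space $X$: unlike the finite-dimensional Picard--Lindel\"of setting invoked for the microscopic system in the earlier Proposition, here the unknowns are \emph{functions} $S(\cdot)$ and $\hat W(\cdot,\cdot)$, so one must verify that $\mathcal{F}$ genuinely maps $C(\R^m)\times C(\R^{2m})$ into itself (continuity of the integral term in $s_1$ as a parameter, and measurability/integrability against $\lambda^1_0$) and that the fixed-point iteration closes in the sup-norm. The technical heart is thus confirming the self-mapping and Lipschitz properties of $\mathcal{F}$ on the chosen function space, rather than the Gr\"onwall estimates, which are routine once the Banach-space framework is in place.
```
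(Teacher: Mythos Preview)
Your approach is essentially the paper's, only spelled out in far greater detail: the paper's own proof is three lines, invoking the Picard--Lindel\"of theorem for existence and uniqueness, deducing injectivity of $s\mapsto S(t;s)$ ``from the uniqueness of the ODE system,'' and adding surjectivity by re-solving \eqref{eq:char1wc}--\eqref{eq:char2wc} with terminal rather than initial data. Your explicit Banach-space formulation in $X = C(\R^m;\R^m)\times C(\R^{2m};\R)$ is exactly what underlies the paper's bare citation of Picard--Lindel\"of, since the integral in \eqref{eq:char1wc} couples all characteristics through $\lambda_0^1$ and cannot be treated pointwise in $s_1$.

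Two small remarks. First, the paper also argues \emph{surjectivity} of $s\mapsto S(t;s)$ by running the characteristic system backward from terminal data; you omit this, but under the standard reading of ``one-to-one'' as injective you are not obliged to supply it. Second, your Gr\"onwall lower bound and the paper's appeal to ODE uniqueness are the same argument in different clothing. The coupling through $\hat W$ that you flag is genuine, and strictly speaking neither your sketch (``absorb into $C$'') nor the paper's one-line justification closes it: the differential inequality for $|S(t;s_1)-S(t;s_1')|$ alone is not self-contained. To make it rigorous one should either run the estimate jointly on the pair $\bigl(|S(t;s_1)-S(t;s_1')|,\ \sup_{s'}|\hat W(t;s_1,s')-\hat W(t;s_1',s')|\bigr)$, or exploit that in the weight-concentration setting $\hat W(t;s_1,s_2)=W(S(t;s_1),S(t;s_2),t)$ factors through $S$, so that coincidence of $S$-values at some time forces coincidence of the full characteristic data and hence a contradiction with backward uniqueness.
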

\begin{proof}
Due to the Picard-Lindel\"of Theorem we conclude existence and uniqueness of solutions of \eqref{eq:char1wc}, \eqref{eq:char2wc}.
The fact that $s \mapsto S(t;s)$ is injective follows from the uniqueness of the ODE system, the surjectivity from the analogous existence result that can be applied to \eqref{eq:char1wc}, \eqref{eq:char2wc} with terminal conditions.
\end{proof}

As a consequence of the analysis of the characteristics we immediately obtain the existence and uniqueness of a solution of \eqref{eq:charweightconcentration} as the push-forward of $\lambda_0^1$ under $S$.
\begin{cor}
Let $U$ and $V$ be Lipschitz continuous and let $\lambda^1_0 \in {\cal M}_+(\R^m))$. Then there exists a unique solution  $\lambda_t^1 \in C(0,T;{\cal M}_+(\R^m))$ of \eqref{eq:charweightconcentration}.
\end{cor}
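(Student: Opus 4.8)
The plan is to obtain $\lambda_t^1$ as the push-forward of the initial measure $\lambda_0^1$ under the characteristic flow $S(t;\cdot)$ constructed in Proposition~\ref{weightconcentrationproof}, and to verify that this push-forward indeed solves \eqref{eq:charweightconcentration} in the weak sense, with continuity in time. First I would define $\lambda_t^1 := S(t;\cdot)_\# \lambda_0^1$, i.e.\ for every test function $\psi$,
$$ \int \psi(s)\,\lambda_t^1(ds) = \int \psi(S(t;s'))\,\lambda_0^1(ds'). $$
Since $\lambda_0^1$ is a nonnegative finite measure and $S(t;\cdot)$ is continuous (indeed a homeomorphism of $\R^m$ by Proposition~\ref{weightconcentrationproof}), this defines $\lambda_t^1 \in {\cal M}_+(\R^m)$ with the same total mass for every $t$.

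Next I would verify that this candidate satisfies the weak form of \eqref{eq:charweightconcentration}. Differentiating the defining identity in $t$ and using the chain rule together with the characteristic ODE \eqref{eq:char1wc} gives
$$ \frac{d}{dt}\int \psi(s)\,\lambda_t^1(ds) = \int \nabla\psi(S(t;s'))\cdot \partial_t S(t;s')\,\lambda_0^1(ds') = \int \nabla\psi(S(t;s'))\cdot \int U(S(t;s'),S(t;s''),\hat W(t;s',s''))\,\lambda_0^1(ds'')\,\lambda_0^1(ds'). $$
Rewriting the inner integral back in terms of $\lambda_t^1$ via the push-forward identity (noting $\hat W(t;s',s'') = W(S(t;s'),S(t;s''))$ so the velocity field is exactly $\int U(s,\sigma,W(s,\sigma))\,\lambda_t^1(d\sigma)$ evaluated along characteristics) recovers precisely the weak formulation of the transport equation \eqref{eq:charweightconcentration}. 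The differentiation under the integral sign is justified by the $C^1$-in-time regularity of $S$ from Proposition~\ref{weightconcentrationproof} together with the Lipschitz bounds on $U$ and the finiteness of $\lambda_0^1$, which provide a dominating function.

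For the continuity statement $\lambda_t^1 \in C(0,T;{\cal M}_+(\R^m))$ I would test against a fixed bounded continuous $\psi$ and use that $t\mapsto S(t;s')$ is continuous (uniformly on compacta, by the Lipschitz/Picard--Lindel\"of estimates), so $t\mapsto \int\psi(S(t;s'))\,\lambda_0^1(ds')$ is continuous by dominated convergence; this gives weak-$*$ continuity, which is the natural topology on ${\cal M}_+$. Uniqueness follows from the standard argument that any weak solution propagates along the characteristics \eqref{eq:char1wc}: given a solution, one tests with functions constant along the flow, or equivalently invokes the uniqueness of the characteristic ODE system already established in Proposition~\ref{weightconcentrationproof}, so that the measure is forced to be the push-forward. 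The main obstacle I anticipate is the self-consistency of the velocity field: unlike a linear transport equation, the drift in \eqref{eq:charweightconcentration} depends on $\lambda_t^1$ itself through the $s_2$-integral, and on $W$ through $\hat W$, so the clean push-forward identity hinges on the fact that Proposition~\ref{weightconcentrationproof} already solves the \emph{coupled} system for $S$ and $\hat W$ simultaneously. Once that coupling is supplied by the Proposition, the remaining verification is essentially routine, which is why this is phrased as an immediate corollary.
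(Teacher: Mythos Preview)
Your proposal is correct and follows exactly the route indicated by the paper: the paper simply states that the solution is obtained as the push-forward of $\lambda_0^1$ under the flow $S$ from Proposition~\ref{weightconcentrationproof}, and you have spelled out the verification (weak formulation via the chain rule along characteristics, continuity in time by dominated convergence, uniqueness from uniqueness of the characteristic ODE) in the expected way. Your observation about the nonlinear self-consistency of the velocity field is also to the point: it is precisely because \eqref{eq:char1wc}--\eqref{eq:char2wc} are written with respect to the fixed initial measure $\lambda_0^1$ that the characteristic system is closed, and the push-forward then automatically reproduces the $\lambda_t^1$-dependent drift in \eqref{eq:charweightconcentration}.
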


\subsection{Closure based on the  conditional distribution}

In the following we provide some further mathematical analysis for the more general case, with the closure based on the conditional distribution. Correspondingly we will consider the equivalent system for the single particle distribution $\mu_t^{ 1}$ and the conditional distribution $\gamma_{t}  = \frac{\mu_t^{ 2}}{\mu_t^{ 1}}$ instead, which is given by
\begin{align} 
\partial_t  \mu^{ 1}_t   +   
  \nabla_{s_1}  \cdot \left(\int   U(s_1,s',w')  \gamma_t(s_1;ds',dw') \mu^1_t \right) &=  0 \label{eq:conddist1} \\
	\partial_t  \gamma_t  + \int   U(s_1,s',w')  \gamma_t(s_1;ds',dw')  \cdot \nabla_{s_1} \gamma_t & \nonumber \\
	+ \nabla_{s_2} \cdot (\int   U(s_2,s',w')  \gamma_t(s_2;ds',dw')  \gamma_t) +
	\partial_w \cdot(V(s_1,s_2,w) \gamma_t)&= 0  \label{eq:conddist2}
\end{align}
Note that we consider $\gamma_t$ as a map from ${\cal S}$ to ${\cal M}_+({\cal S} \times {\cal W})$.
It is straight-forward in this formulation to verify that the equations are consisted with \eqref{eq:charweightconcentration}, i.e. the
weight-concentration case still yields special solutions of \eqref{eq:conddist1}, \eqref{eq:conddist2}. This further leads to the idea to investigate the latter with a similar system of characteristics, which we will discuss in the following.

%

\subsubsection{A method of characteristics} 
 
In order to analyze the existence and uniqueness of solutions to \eqref{eq:conddist1}, \eqref{eq:conddist2}, we study
the characteristic system
\begin{align}
\partial_t S(t;s_1,w) &= \int  U(S (t;s_1,w),S(t;s',w'),W(t;s_1,s',w'))  \gamma_0(s_1;ds',dw') \label{eq:char1} \\
\partial_t W(t;s_1,s_2,w) &= V(S(t;s_1),S (t;s_2,w),W(t;s_1,s_2,w)), \label{eq:char2}
\end{align}
which generalizes the one in the case of weight concentration. With an analogous proof to the one of Proposition \ref{weightconcentrationproof} 

\begin{prop}
Let $U$ and $V$ be Lipschitz continuous and let $\mu^1_0 \in {\cal M}_+(\R^m), \gamma_0 \in C(\R^m;{\cal M}_+(\R^m \times \R))$
Then there exists a unique solution 
$$ S \in C^1(0,T;C(\R^m)), \qquad W \in C^1(0,T;C(\R^{2m+1}))$$
of the characteristic system \eqref{eq:char1}, \eqref{eq:char2}. Moreover, $(s_1,s_2,w) \mapsto (S(t;s_1,w),S(t;s_2,w),W(t;s_1,s_2,w))$ is a one-to-one map on $\R^{2m+1}$.
\end{prop}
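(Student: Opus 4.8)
The plan is to follow the scheme of Proposition \ref{weightconcentrationproof}, recasting \eqref{eq:char1}, \eqref{eq:char2} as a single autonomous ODE in a Banach space of fields and applying the abstract Picard--Lindel\"of theorem. First I would fix the data $\mu^1_0$ and $\gamma_0$ and regard the unknown as the pair $(S,W)$ living in $X = C(\R^m \times \R; \R^m) \times C(\R^{2m+1}; \R)$, writing the right-hand sides of \eqref{eq:char1}, \eqref{eq:char2} as a map $\mathcal{F}: X \to X$. The crucial estimate is Lipschitz continuity of $\mathcal{F}$ in the sup norm: for two pairs $(S,W)$, $(\tilde S, \tilde W)$, Lipschitz continuity of $U$ together with the fact that $\gamma_0(s_1;\cdot)$ is a \emph{finite} (indeed probability) measure for each $s_1$ gives
$$ \Big| \int \big[U(S,\ldots) - U(\tilde S, \ldots)\big]\, \gamma_0(s_1;ds',dw') \Big| \le L_U\big(2\|S - \tilde S\|_\infty + \|W - \tilde W\|_\infty\big), $$
and the corresponding bound for the $V$-term is immediate from Lipschitz continuity of $V$. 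Since this bound is global (uniform in the base point and in time), a Bielecki-type weighted-in-time norm turns the associated integral operator into a contraction on $C([0,T];X)$ for arbitrary $T$, yielding a unique fixed point with the claimed regularity; continuity of $\mathcal{F}(S,W)$ in the base variables follows from the assumed continuity $\gamma_0 \in C(\R^m;\mathcal{M}_+(\R^m\times\R))$ by dominated convergence.

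For bijectivity of the time-$t$ map $\Phi_t(s_1,s_2,w) = (S(t;s_1,w),S(t;s_2,w),W(t;s_1,s_2,w))$ I would argue as in the cited proposition. Once the fields $S,W$ are known, for each fixed label $(s_1,s_2,w)$ the trajectory $t\mapsto \Phi_t(s_1,s_2,w)$ solves a non-autonomous ODE whose coefficients are now known, Lipschitz functions of the current state; surjectivity then follows by solving the same characteristic system backwards in time from prescribed terminal data, well posed by the identical contraction argument applied to the time-reversed equations. Injectivity I would obtain from a Gr\"onwall lower bound: estimating $\tfrac{d}{dt}|\Phi_t(L)-\Phi_t(\tilde L)|$ from below by $-C|\Phi_t(L)-\Phi_t(\tilde L)|$ yields $|\Phi_t(L)-\Phi_t(\tilde L)| \ge e^{-Ct}|L-\tilde L|$, so distinct labels stay distinct.

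The main obstacle I anticipate is twofold and sits precisely in the interplay between the mean-field coupling and the unboundedness of the initial data. First, because $U$ and $V$ are only Lipschitz they grow at most linearly, while the initial fields $S(0;\cdot) = \mathrm{id}$ and $W(0;\cdot,\cdot,\cdot) = w$ are unbounded; the plain sup norm is therefore not finite and one must work either with weighted norms $\|f\|_* = \sup_z |f(z)|/(1+|z|)$ or first derive an a priori Gr\"onwall bound $\|S(t;\cdot)-\mathrm{id}\|,\ \|W(t;\cdot)-\mathrm{id}\| \le C(T)$ before closing the fixed point, which in turn needs a finite first moment (or compact support) of $\gamma_0(s_1;\cdot)$ uniformly in $s_1$. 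Second, and more delicate, the characteristic velocity of the $S$-components depends on the label $s_1$ \emph{directly} through $\gamma_0(s_1;\cdot)$ and through $W(t;s_1,\cdot,\cdot)$, not merely through the current position $S(t;s_1,w)$; the pointwise map is thus not the flow of a state-autonomous vector field, and the Gr\"onwall lower bound underlying injectivity genuinely requires Lipschitz --- not merely continuous --- dependence of $s_1 \mapsto \gamma_0(s_1;\cdot)$ in a suitable (e.g.\ bounded-Lipschitz or Wasserstein) metric, which is marginally stronger than the stated hypothesis and should be made explicit.
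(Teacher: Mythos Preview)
Your approach is exactly the one the paper takes: it declares the proof ``analogous to Proposition~\ref{weightconcentrationproof}'', i.e.\ Picard--Lindel\"of for the field-valued ODE, injectivity from uniqueness, and surjectivity from solving with terminal data. In that sense there is nothing to correct in your strategy.

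What is worth recording is that the two obstacles you flag are not deficiencies of your argument but genuine points the paper leaves implicit. The unboundedness of the identity initial datum indeed forces either a weighted sup norm or an a priori linear-growth bound before the contraction closes; the paper does not spell this out in either proposition. More substantively, your second observation is sharp: because the velocity of the $S$-characteristic through label $s_1$ depends on $\gamma_0(s_1;\cdot)$ and on $W(t;s_1,\cdot,\cdot)$ rather than only on the current position, the map $\Phi_t$ is \emph{not} the flow of a vector field on $\R^{2m+1}$, and the one-line ``injectivity from uniqueness'' invoked in the paper does not apply verbatim. Your Gr\"onwall lower-bound route is the right repair, and it does need quantitative (Lipschitz-type) control of $s_1\mapsto\gamma_0(s_1;\cdot)$, which is stronger than the stated $C(\R^m;\mathcal{M}_+)$ hypothesis. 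So your proposal is correct and in fact more careful than the paper's own proof sketch; the extra regularity assumption you suggest making explicit is a fair reading of what is actually needed.
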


Based on the solutions of the characteristic system we can construct a solution of  \eqref{eq:conddist1}, \eqref{eq:conddist2}: we define $\mu^1_t$ as the push-forward of $\mu^1_0$ under $S$ and $\gamma^t$ via
$$ \int \int \varphi(S(t;s_2,w),W(t;s_1,s_2,w)) \gamma_t(S(t;s_1,w);ds_2,dw) = \int \int \varphi(s_2,w) \gamma_0(s_1;ds_2,dw). $$
for all $t \in [0,T]$, $s_1 \in \R^m$ and $\varphi \in C_b(\R^{m+1})$.
\begin{cor}
Let $U$ and $V$ be Lipschitz continuous and let $\mu^1_0 \in {\cal M}_+(\R^m),  \gamma_0 \in C(\R^m;{\cal M}_+(\R^m \times \R))$. Then there exists a unique solution  $\mu_t^1 \in C(0,T;{\cal M}_+(\R^m))$, $\gamma_t \in C(0,T;C(\R^m;{\cal M}_+(\R^m \times \R)))$ of \eqref{eq:conddist1}, \eqref{eq:conddist2}.
\end{cor}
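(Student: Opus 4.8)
The plan is to deduce the corollary directly from the preceding proposition on the characteristic system, following exactly the same logic that yielded the corollary after Proposition \ref{weightconcentrationproof} in the weight-concentration case. The proposition guarantees a unique solution $(S,W)$ of the characteristic system \eqref{eq:char1}, \eqref{eq:char2} with the stated regularity, together with the one-to-one property of the associated flow map. So the work here is essentially to verify that the $\mu^1_t$ and $\gamma_t$ constructed via push-forward under these characteristics genuinely solve \eqref{eq:conddist1}, \eqref{eq:conddist2}, and that this construction exhausts all solutions.

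First I would establish \emph{existence}. Define $\mu^1_t$ as the push-forward of $\mu^1_0$ under $s \mapsto S(t;s)$, and define $\gamma_t$ by the transport identity stated just before the corollary. To check that these satisfy the weak forms \eqref{eq:conddist1}, \eqref{eq:conddist2}, I would differentiate $\int \varphi(s_1)\,\mu^1_t(ds_1) = \int \varphi(S(t;s))\,\mu^1_0(ds)$ in time, using \eqref{eq:char1} for $\partial_t S$; the chain rule reproduces exactly the transport term in \eqref{eq:conddist1}, with the velocity field $\int U(s_1,s',w')\gamma_t(s_1;ds',dw')$ appearing because $\gamma_t$ was defined precisely as the push-forward of $\gamma_0$. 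The analogous differentiation of the defining identity for $\gamma_t$, invoking both \eqref{eq:char1} and \eqref{eq:char2}, should recover the three transport terms of \eqref{eq:conddist2}. The regularity claims $\mu^1_t \in C(0,T;{\cal M}_+(\R^m))$ and $\gamma_t \in C(0,T;C(\R^m;{\cal M}_+(\R^m\times\R)))$ follow from the $C^1$-in-time, $C$-in-space regularity of $S$ and $W$ asserted by the proposition, since continuous push-forward of a fixed measure by a continuously-varying proper map is continuous in the weak-$*$ topology.

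For \emph{uniqueness}, I would argue that any weak solution pair $(\mu^1_t,\gamma_t)$ must transport along the characteristics \eqref{eq:char1}, \eqref{eq:char2}. The standard route is to test \eqref{eq:conddist1}, \eqref{eq:conddist2} against functions composed with the backward flow and show the resulting quantity is constant in time, forcing the solution to coincide with the push-forward construction; the one-to-one property of the combined flow map $(s_1,s_2,w)\mapsto(S,S,W)$ guaranteed by the proposition is what makes the backward flow well-defined and the representation unambiguous. Since the velocity field in the state equation is itself determined by $\gamma_t$, uniqueness for the coupled pair follows from the uniqueness of the closed ODE system in the proposition, i.e. the coupling is resolved once and for all at the level of the characteristics.

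The main obstacle I anticipate is the \emph{self-consistency of the nonlinear coupling}: unlike a linear transport equation, the velocity $\int U(s_1,s',w')\gamma_t(s_1;ds',dw')$ in \eqref{eq:conddist1} depends on the very object $\gamma_t$ being transported, and $\gamma_t$ in turn is advected by fields built from itself. The construction circumvents this by folding the entire coupling into the closed characteristic system \eqref{eq:char1}, \eqref{eq:char2} — so the genuine difficulty has already been absorbed into the cited proposition, and the corollary's own proof should be a relatively routine verification. The one delicate point to handle carefully is confirming that the two-variable object $\gamma_t$, viewed as a map $s_1 \mapsto \gamma_t(s_1;\cdot)$, is consistently transported: one must check that the dependence of $S(t;s_1,w)$ on the auxiliary weight argument $w$ and the matching of indices in the defining identity for $\gamma_t$ are compatible with the marginal structure $\gamma_t = \mu^2_t/\mu^1_t$, so that the constructed $\gamma_t$ is indeed a conditional distribution and not merely a formal kernel.
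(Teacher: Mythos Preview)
Your proposal is correct and follows essentially the same approach as the paper: the paper gives no explicit proof of the corollary, but immediately before it describes the construction of $\mu^1_t$ as the push-forward of $\mu^1_0$ under $S$ and of $\gamma_t$ via the transport identity, leaving the corollary as a direct consequence of the preceding proposition on the characteristic system. Your outline simply fills in the standard verification details of this push-forward construction and the uniqueness via the one-to-one flow map, which is exactly what the paper implicitly intends.
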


\subsubsection{Propagation of gradient flow structures}

In a gradient flow setting ($U = - \nabla_s F, V= - c \partial_w F$), we can rewrite the energy as
$$ \hat E(\mu^2) =  \int \int \int F(s_1,s_2,w) \mu^2(ds_1,ds_2,dw) . $$
It is thus more convenient to use the formulation in terms of the pair distribution $\mu^2$ instead of \eqref{eq:conddist1}, \eqref{eq:conddist2}. We have
\begin{align*} 
\partial_t  \mu^{ 2}_t   =& \nabla_{s_1} \cdot (\int   \nabla_{s_1} F(s_1,s',w')  \gamma_t(s_1;ds',dw')  \mu^2_t) 
	+  \nonumber \\
& \nabla_{s_2} \cdot (\int   \nabla_{s_2} F(s',s_2,w')  \gamma_t(s_2;ds',dw')  \mu^2_t) +  c	\partial_w \cdot( \partial_w F(s_1,s_2,w) \mu^2_t), 
\end{align*}
which is indeed an abstract gradient flow formulation, since it applies a negative semidefinite operator to the energy variation. The energy dissipation is given by 
\begin{align*} 
\frac{d}{dt} \hat E(\mu_t^2) =&  - \int   \left\vert \int \int \nabla_{s_1} F(s_1,s',w') \gamma_t(s_1,ds',dw') \right\vert^2 \mu_t^1(ds_1) \\& - \int   \left\vert \int \int \nabla_{s_2} F(s_2,s',w') \gamma_t(s_2,ds',dw') \right\vert^2 \mu_t^1(ds_2) \\& - c \int \int \int |\partial_w F(s_1,s_2,s_3|^2 \mu_t^2(ds_1,ds_2,dw). 
\end{align*}
Thus, the gradient flow structure and energy dissipation is propagated through this kind of closure, which appears to be a unique property for this kind of closure. In the general case the first two dissipation terms on the right-hand side would change, e.g. the first one to
$$
- \int  \int    \int \int \int \nabla_{s_1} F(s_1,s_3,w_{13})  \nabla_{s_1} F(s_1,s_2,w_{12})    \mu_t^3(ds_1,ds_2,ds_3,dw_{12} dw_{13},dw_{23}) 
$$
and with other closures such as the Kirkwood closure there is no particular reason for this term to be negative or in other terms the associated bilinear form of $F$ is not definite.

\section{Variants and Open Problems}

\subsection{Weight Interactions and Balance Theory}

In our previous treatment we have assumed that there is no interaction between weights, but this was mainly a simplification to present the basic approach. Many recent treatments of social network formation rely on social balance theory, which effectively implies that the change of network links (weights) depends on the weights in a triangle with a third agent (cf. \cite{belaza,pham,pham2,ravazzi}). In our modelling approach this is a minor modification at the microscopic level to 
\begin{align}
\partial_t \mu_t^N =& \frac{1}N \sum_{i=1}^N \sum_{j\neq i} D_{s_i}^* (U(s_i,s_j,w_{ij}) \mu_t^N) + \frac{1}N\sum_{i=1}^N \sum_{j\neq i} 
\sum_{k\neq i,j}  D_{w_{ij}}^* (V(s_i,s_j,w_{ij},w_{ik},w_{jk}) \mu_t^N) \nonumber \\
& + \sum_{i=1}^N  D_{s_i}^* (U_0(s_i) \mu_t^N) - \sum_{i=1}^N  D_{s_i}^*  D_{s_i} (Q(s_i) \mu_t^N) - \sum_{i=1}^N \sum_{j\neq i} D_{w_{ij}}^*D_{w_{ij}}(R(s_i,s_j,w_{ij}) \mu_t^N). \label{eq:weightinteraction} 
\end{align}
There is however an interesting consequence for the macroscopic description, since we will get an integral involving a higher order moment also in the term with $V$. A simple closure based on the conditional distribution will definitely not suffice if $V$ depends on a triangle of weights, hence the Kirkwood closure seems to be the apparent option. 

\subsection{Beyond Pair Interaction}

As we have seen already in the example of the voter model from \cite{thurner} above, several models go beyond a simple pair interaction, e.g. with triplets (cf. \cite{lambiotte,zanette,neuhaeuser}) or with interactions via the degree (cf. \cite{kohne,raducha,tur}), which effectively averages over weights. Another effect found in some models is an averaging of states of the network or adjacent nodes (cf. e.g. \cite{baumann}). Since many social processes may indeed appear from interaction of more than two agents, it will be an interesting direction to advance the approach to such cases and study their effects on pattern formation. In particular it will be relevant to study whether the pair distribution is still sufficient to provide a suitable approximation.

\subsection{Analysis of Pair Closures}

From a mathematical point of view, an important open challenge is the analysis of pair closures that seem inevitable in the case of network problems. Already the analysis of the arising equations for the pair distribution is not fully covered by the existing PDE theory, it is also open whether it is more suitable to study the formulation with the pair distribution or rather the one with the conditional distribution.
Also the analysis of gradient structures or generalizations at the level of the pair distribution is an interesting topic for further investigations, related to the evolution of gradient structures like in \cite{burger} and possibly with other structures as in \cite{maas} for discrete models. 

An even more challenging question is the analysis of pair closure relations for the BBGKY-type hierarchy, for which one would possibly need similar quantitative estimates as for the mean-field limit in \cite{mischler,paul}, which currently seem out of reach. Another level of complication would be to go beyond the mean-field scaling we have confined our presentation to. 

From a more practical point of view it needs to be investigated which macroscopic models are efficient for simulations and in particular which can be effectively calibrated from data in order to make more than rough qualitative predictions.

\section*{Acknowledgements}

The author acknowledges partial financial support by  European Union’s Horizon 2020 research and
innovation programme under the Marie Sklodowska-Curie grant agreement No. 777826
(NoMADS) and the German Science Foundation (DFG) through CRC TR 154  "Mathematical Modelling, Simulation and Optimization Using the Example of Gas Networks", Subproject C06.

\begin{appendix}

\section{Minimal Model} \label{appendixminimal}

\subsection*{Proof of Theorem \ref{minimalexistencethm}}

Local existence and uniqueness in time is a direct consequence of the Picard-Lindel\"of Theorem and since the ODE systems \eqref{eq:minimal1}-\eqref{eq:minimal6} as well as \eqref{eq:minimalk1}-\eqref{eq:minimalk6} are autonomous with $C^\infty$ right-hand side we obtain the regularity. Nonnegativity of solutions can be inferred easily from standard results on positive semigroups by writing the system in the form $\partial_t u = - A(u) u$, where $A(u)$ is an M-matrix, which implies that the resolvent is positive (cf. \cite{poole}) and hence the
(cf. \cite{arendt}). Then with the conservation of the sum ($\partial_t (\rho_+ + \rho_-)=0$) we see that all components of the solution are contained in $[0,1]$. As long as $\rho_+$ and $\rho_-$ are bounded away from zero, the right-hand side in both ODE systems is globally Lipschitz-continuous, thus a unique solution exists. Hence, a finite existence interval can only occur if $\rho_+(t) \rightarrow 0$ or $\rho_-(t) \rightarrow 0$. In the case  $\alpha_{-+} = \alpha_{-+}$ we see from \eqref{eq:rhoplusevolution} that $\rho_+$ and $\rho_-$ are conserved, hence remain bounded away from zero globally in time.

\subsection*{Linearization around polarized stationary solutions}

We denote the variables for the linearized problem by $\tilde f_{**}$ and $\tilde g_{**}$, respectively. The linearized problem corresponding to \eqref{eq:minimal1}-\eqref{eq:minimal6} around a stationary state with $f_{+-} = 0$ is given by
\begin{align*}
\partial_t \tilde f_{++} =&   - \alpha_{+-} \frac{f_{++} }{\rho_+} \tilde f_{+-} + \beta_{++} \tilde g_{++} - \gamma_{++} \tilde f_{++}
 \\
\partial_t \tilde g_{++} =& \alpha_{-+} \frac{g_{+-}}{\rho_-} \tilde f_{+-} - \alpha_{+-} \frac{g_{++} }{\rho_+} \tilde f_{+-} - \beta_{++} \tilde g_{++} + \gamma_{++} \tilde f_{++} \\
\partial_t \tilde f_{--} =&    - \alpha_{-+} \frac{f_{--} }{\rho_-} \tilde f_{+-} + \beta_{--} \tilde g_{--} - \gamma_{--} \tilde f_{--} \\
\partial_t \tilde g_{--} =& \alpha_{+-} \frac{g_{+-} }{\rho_+} \tilde  f_{+-} - \alpha_{-+} \frac{g_{--}}{\rho_-} \tilde  f_{+-} - \beta_{--} \tilde g_{--} + \gamma_{--} \tilde f_{--} \\
\partial_t \tilde f_{+-} =&  \left( \alpha_{+-} \frac{f_{++}}{2\rho_+}   + \alpha_{-+} \frac{f_{--}  }{2\rho_-} - \gamma_{+-} \right) \tilde f_{+-} \\
\partial_t \tilde g_{+-} =&  \left( - \alpha_{-+} \frac{g_{+-}}{2\rho_-} + \alpha_{+-} \frac{g_{++}  }{2\rho_+}  - \alpha_{+-} \frac{g_{+-}}{2\rho_+} + \alpha_{-+} \frac{g_{--} }{2\rho_-} + \gamma_{+-} \right) \tilde f_{+-} \nonumber  
\end{align*}
The linearized problem corresponding to \eqref{eq:minimalk1}-\eqref{eq:minimalk6} is analogous with additional quotients of the form $\frac{h_{*\diamond}}{\rho_+ \rho_\diamond}$ with $*,\diamond \in \{+,-\}$.

\subsection*{Proof of Proposition \ref{smallepsilonprop}}

We fix $\rho_+$ (and consequently $\rho_- = 1 - \rho_+$) and study the implicit equation around $\epsilon = 0$ and the stationary solution from Proposition \ref{minimalstatprop}. In order to obtain a regular linearized system we first eliminate the variables $g_{++}$ and $g_{--}$
from the given conditions for the fixed densities $\rho_+$ and $\rho_-$ to obtain
$$g_{++} = \rho_+ - f_{+-} - g_{+-} - f_{++}, \quad g_{--} = \rho_- - f_{+-} - g_{+-} - f_{--}. $$
\begin{align*}
 \alpha_{-+} \frac{f_{+-}^2}{\rho_-} - \alpha_{+-} \frac{f_{++} f_{+-}}{\rho_+} - \beta_{++} f_{+-} - \beta_{++} g_{+-} -( \beta_{++} + \gamma_{++}) f_{++} &= - \beta_{++} \rho_+
  \\
   \alpha_{+-} \frac{f_{+-}^2}{\rho_+} - \alpha_{-+} \frac{f_{--} f_{+-}}{\rho_-} - \beta_{--} f_{+-} - \beta_{--} g_{+-} -( \beta_{--} + \gamma_{--}) f_{--} &=  - \beta_{--} \rho_+\\
  - \alpha_{-+} \frac{f_{+-}^2}{2\rho_-} + \alpha_{+-} \frac{f_{++} f_{+-}}{2\rho_+} - \alpha_{+-} \frac{f_{+-}^2}{2\rho_+} + \alpha_{-+} \frac{f_{--} f_{+-}}{2\rho_-}   + \epsilon g_{+-} - \gamma_{+-} f_{+-}  &= 0\\
 - \alpha_{-+} \frac{g_{+-}f_{+-}}{2\rho_-} + \alpha_{+-} \frac{(\rho_+ - f_{+-} - g_{+-} - f_{++}) f_{+-}}{2\rho_+}  - \alpha_{+-} \frac{g_{+-}f_{+-}}{2\rho_+} + \\ \alpha_{-+} \frac{(\rho_- - f_{+-} - g_{+-} - f_{--}) f_{+-}}{2\rho_-} - \epsilon g_{+-} + \gamma_{+-} f_{+-} &=0  .
\end{align*}
It is straightforward to verify that the linearization at $\epsilon = 0$ and the corresponding stationary solution is regular under the above conditions and hence, by the implicit function theorem there exists a locally unique solution for $\epsilon$ sufficiently small, whose dependence on $\epsilon$ is differentiable. From the equation
$$  \left( \alpha_{-+} \frac{f_{+-} }{2\rho_-} -\alpha_{+-} \frac{f_{++} }{2\rho_+} + \alpha_{+-} \frac{f_{+-}}{2\rho_+} - \alpha_{-+} \frac{f_{--}}{2\rho_-}  +  \gamma_{+-}   \right)f_{+-}=  \epsilon g_{+-}  $$
it is straightforward to verify that $\frac{df_{+-}}{d\epsilon}(0)$ is positive, hence the solution is indeed nonnegative for $\epsilon$ sufficiently small. 

\end{appendix} 

\end{document}